\renewcommand\eqref[1]{(\ref{#1})} 
\def\A{{\mathcal A}}
\def\O{\Omega}
\def\th{\theta}
\def\N{\mathbb{N}}
\def\H{\mathcal H}
\def\R{\mathbb{R}}
\def\e{{\sf e}}
\def\ep{{\epsilon}}
\def\g{{\mathfrak g}}
\def\im{\mathop{\mathsf{Im}}\nolimits}
\def\re{\mathop{\mathsf{Re}}\nolimits}
\def\({\left(}
\def\[{\left[}
\def\){\right)}
\def\]{\right]}
\def\si{\sigma}
\def\a{\mathfrak{a}}
\def\d{\mathrm{d}}
\def\G{{\sf G}}
\def\p{\parallel}
\def\<{\langle}
\def\>{\rangle}
\def\a{{\sf a}}
\newtheorem{Theorem}{Theorem}[section]
\newtheorem{Remark}[Theorem]{Remark}
\newtheorem{Lemma}[Theorem]{Lemma}
\newtheorem{Corollary}[Theorem]{Corollary}
\newtheorem{Proposition}[Theorem]{Proposition}
\newtheorem{Definition}[Theorem]{Definition}
\newtheorem{Definition-Proposition}[Theorem]{Definition-Proposition}
\newtheorem{Example}[Theorem]{Example}
\numberwithin{equation}{section}
\begin{document}


\title{Resolvent Estimates and Smoothing for\\ Homogeneous Operators on Graded Lie Groups}

\date{\today}

\bigskip
\bigskip
\bigskip
\author{Marius M\u antoiu \footnote{
\textbf{2010 Mathematics Subject Classification: Primary 22E25, 47F05, Secondary 22E30.}
\newline
\textbf{Key Words:}  graded Lie group, Rockland operator, sublaplacian, limiting absorption principle, smoothing estimate}
}
\date{\small}
\maketitle \vspace{-1cm}


\bigskip
\bigskip
\begin{abstract}
By using commutator methods, we show uniform resolvent estimates and obtain globally smooth operators for self-adjoint injective homogeneous operators $H$ on graded groups, including Rockland operators, sublaplacians and many others. Left or right invariance is not required. Typically the globally smooth operator has the form $T=V|H|^{1/2}$, where $V$ only depends on the homogeneous structure of the group through Sobolev spaces, the homogeneous dimension and the minimal and maximal dilation weights. For stratified groups improvements are obtained, by using a Hardy-type inequality. Some of the results involve refined estimates in terms of real interpolation spaces and are valid in an abstract setting. Even for the commutative group $\R^N$ some new classes of operators are treated.
\end{abstract}

\bigskip
\bigskip
\bigskip
\bigskip
\bigskip
{\bf Address}: Departamento de Matem\'aticas, Universidad de Chile, 

\smallskip
\quad\quad\quad\quad\ Las Palmeras 3425, Casilla 653, Santiago, Chile.

\bigskip
{\bf e-mail:} mantoiu@uchile.cl

\bigskip
\bigskip
\medskip
{\bf Acknowledgements:} 

\medskip
The author has been supported by Proyecto Fondecyt 1160359 and the N\'ucleo Milenio de F\'isica Matem\'atica RC120002.

\newpage

\section{Introduction}\label{duci}

Let $\G$ be a stratified Lie group of dimension $N$ and homogeneous dimension $M$ and $\Delta$ a (positive) sublaplacian \cite{FS,Ri,FR} on $\G$\,, seen as an unbounded self-adjoint operator in the Hilbert space $L^2(\G)$\,. For $\alpha>0$ one defines the (fractional) power $\Delta^\alpha$ by the usual functional calculus; it is still an unbounded self-adjoint positive operator. In particular, we set $|D|:=\Delta^{1/2}$. We are going to study unbounded self-adjoint homogeneous operators on the Hilbert space $\H:=L^2(\G)$\,. This includes the operators $\Delta^\alpha$ or their generalizations, positive fractional powers of Rockland operators on graded Lie groups \cite{FR}. Actually any injective self-adjoint homogeneous operator on an arbitrary Hilbert space is covered by our method, but our examples belong to the setting of graded groups endowed with their well-known homogeneous structures.

\smallskip
We start by describing two of our results. In a stratified group there is an action of the multiplicative group $(\R_+,\cdot)$ by dilations, that we will specify below; let $\nu_N\in\{1,2,\dots\}$ be the largest exponent (weight) of this action. We choose an homogeneous qusi-norm $\,[\cdot]:\G\to\R_+$ and denote by $[x]$ the operator of multiplication by the function $x\to[x]$ in $L^2(\G)$\,. The object to study is {\it a self-adjoint operator $H$ in $L^2(\G)$ which is supposed injective and homogeneous of strictly positive order with respect to the mentioned dilations} (left or right invariance is not needed). The absolute value $|H|$ and the strongly continuous unitary group $\big\{e^{itH}\!\mid\!t\in\R\big\}$ generated by $H$ are defined via the functional calculus. Proposition \ref{giurgiu} says that for any $\th\in(1/2,1]$ there is a positive constant $C_{\th}$ such that for any $u\in L^2(\G)$ one has
\begin{equation}\label{magurele}
\int_\R\,\big\Vert\,(1+[x])^{-\th(\nu_N-\nu_1)}[x]^{-\th\nu_1}({\sf Id}+|D|)^{-\th(\nu_N-\nu_1)}|D|^{-\th\nu_1}|H|^{1/2}e^{itH}u\,\big\Vert^2 dt\le C_\th\!\p\!u\!\p^2.
\end{equation}
One also has to impose $M\ge 3$\,; but all the non-commutative stratified Lie groups satisfy this. The number $\nu_1=1$ is the smallest exponent of the action by dilations. In our proofs it appears because we are using the generator of dilations, which contains vector fields $X_j$ which are homogeneous of various orders belonging to $\{\nu_1=1,2,\dots,\nu_N\}$\,. It seemed to us that including it explicitly in \eqref{magurele} is also a hint to the way parameters connected to the group are involved: For example the function $\lambda\to\varphi_\th(\lambda)$ defining the $|D|$-depending factor decays as $\lambda^{-\th\nu_N}$ at infinity and is singular as $\lambda^{-\th\nu_1}$ close to the origin. Similarly for the space behavior, expressed by a function applied to the homogeneous quasi-norm.

\smallskip
In the larger setting of graded Lie groups, sublaplacians are replaced by the more general concept of Rockland operator. If one chooses such an operator $R_0$\,, positive and homogeneous of degree $q$\,, then $\mathbf D:=R_0^{1/q}$ plays the same role as the operator $|D|$\,; in particular it serves to define admissible norms on the homogeneous and non-homogeneous Sobolev spaces $\dot L^2_1(\G)$ and $L^2_1(\G)$\,, respectively. Then, if $H$ is a self-adjoint injective homogeneous operators on the graded group, we show in Proposition \ref{ziceni} and Corollary \ref{urzziceni} the estimate
\begin{equation}\label{mugurele}
\int_\R\,\big\Vert\,(1+[x])^{-\th\nu_N}({\sf Id}+\mathbf D)^{-\th\nu_N}|H|^{1/2}e^{itH}u\,\big\Vert^2 dt\le C'_\th\!\p\!u\!\p^2.
\end{equation}
The factors singular at the origin are no longer present. Note again that the unbounded operator $|H|^{1/2}$ is involved. As far as we know, these are the first results of this type for such operators on non-commutative Lie groups.

\smallskip
The abstract form of \eqref{magurele} and \eqref{mugurele} is given by

\begin{Definition}\label{sfantugheorghe}
Let $H$ be a self-adjoint operator in the Hilbert space $\H$ and $T$ a densely defined operator with domain ${\sf Dom}(T)$\,. One says that $T$ is {\rm (globally) $H$-smooth} if, for some constant $C\ge 0$\,, one has 
\begin{equation}\label{turgusecuiesc}
\int_\R\,\big\Vert Te^{itH}v\big\Vert^2 dt\le C\!\p\!v\!\p^2\,,\quad\forall\,v\in\H\,.
\end{equation}
\end{Definition}

Thus, under the stated requirements, \eqref{magurele} tells us that the operator
\begin{equation}\label{fundulea}
U_{\th}(H):=(1+[x])^{-\th(\nu_N-\nu_1)}[x]^{-\th\nu_1}({\sf Id}+|D|)^{-\th(\nu_N-\nu_1)}|D|^{-\th\nu_1}|H|^{1/2}
\end{equation} 
is $H$-smooth. Note that $\,U_{\th}(H)=V_{\th}|H|^{1/2}$, where 
$$
V_{\th}:=(1+[x])^{-\th(\nu_N-\nu_1)}[x]^{-\th\nu_1}({\sf Id}+|D|)^{-\th(\nu_N-\nu_1)}|D|^{-\th\nu_1}
$$ 
only depends on the group $\G$\,, through the parameters $\nu_N$ and $\nu_1$ (the maximal and the minimal homogeneity degrees), the homogeneous quasi-norm $[\cdot]$ and the Sobolev-admissible operator $|D|$\,. Similar facts are true for \eqref{mugurele}.

\smallskip
This concept of smoothness is due to T. Kato \cite{Ka}, see also \cite[XIII.7]{ReSi}, and has a lot of interesting applications. There are weaker local notions that we do not discuss. Estimation \eqref{turgusecuiesc} is non-trivial even for bounded operators $T$\,; note, for example, that ${\sf Id}$ is never $H$-smooth. An important issue is to get examples of unbounded $H$-smooth operators $T$ and then \eqref{turgusecuiesc} will be seen as a strong property of regularization of the evolution group of $H$ with respect to the domain of $T$. Even ignoring the exigent square integrability, if \eqref{turgusecuiesc} holds, then, for every $v\in\H$\,, the vector $e^{itH}v$ should belong to ${\sf Dom}(T)$ for almost all the values of $t\in\R$\,. So one aims at getting "large" $H$-smooth operators and hope that for vectors $v$\,, membership of $e^{itH}v$ to ${\sf Dom}(T)$ and the estimation \eqref{turgusecuiesc} happens to be a transparent and/or interesting properties.
Since usually the smooth operators one obtains are products between a multiplication operator and a function of the derivatives, the interpretation is in terms of spatial behaviour and/or smoothing (gain of derivatives). Our operator $H$ can be very general; but if $H=\Delta^\alpha$, for instance, one has in \eqref{magurele} a factor behaving as $|D|^{\alpha-\th\nu_N}$ at infinity "in the spectral repesentation of $|D|$"\,.

\smallskip
The theory of smooth operators, including plenty of types of operators $H$ and concrete examples $T$, has a long history that we cannot review here. Most of the results (but not all) are for $\G=\R^N$, eventually seen as an Abelian Lie group. Staying mainly in the realm of "constant coefficient differential operators", we mention references \cite{CS,KY,BAD,Wa,BAK,MP,Su,Do,Ho,Ho1,Chi,RSug,Chi1,RSug1,RSug2,RSug3}, but there are many other articles dedicated to this topic.

\smallskip
If $\G=\!\R^N$ is Abelian, then $M=N$, the dilations are the standard ones, $\nu_1=\nu_N=1$ and $\Delta:=-\partial_1^2-\dots-\partial^2_N$ is the usual (positive) Laplace operator. Instead of $[\cdot]$ one can take any of the usual norms $|\cdot|$ on $\R^n$. So, if $H$ is a self-adjoint injective homogeneous operator in $L^2(\R^N)$ and $N\ge 3$ and $\th\in(1/2,1]$\,, then $|x|^{-\th}\Delta^{-\th/2}|H|^{1/2}$ {\it is globally $H$-smooth by our Proposition \ref{giurgiu}}. We believe that for such general operators $H$ this result is new even in the commutative case: note that $H$ is not supposed to be invariant or, if it is, $H=b(D)$ needs no assumption of ellipticity or dispersiveness of the symbol $b$\,. We also mention Corollary \ref{lippia}, saying that $({\sf id}+|A|)^{-\th}|H|^{1/2}$ is $H$-smooth, where $H$ verifies the same conditions as before, $A$ is the generator of dilations in $\R^N$ and $\th>1/2$\,. Actually one can take $\th=1/2+0^+$ in a certain sense, made precise by real interpolation; see Corollary \ref{lipia}. And these results are in fact valid for every graded group. 

\smallskip
In particular $|x|^{-\th}\Delta^{(\alpha-\th)/2}$ is $\Delta^\alpha$-smooth if $\alpha>0$\,, $\th\in(1/2,1]$ and $N\ge 3$\,. Initially, Kato and Yajima have shown this result for $\alpha=1$ in \cite{KY}\,. We refer to \cite[Th.\;5.2]{RSug2} for a better result, containing the case $\Delta^\alpha$ for commutative $\G$\,. Clearly, there are many results for $\R^N$ that cannot be obtained from ours.
We emphasis the non-commutative case in this article, which seems to be completely new. An interesting question is if recent techniques \cite{RSug2,RSug3} using canonical transformations and comparison principles could be pushed to the graded Lie group framework.

\smallskip
Most of the proofs concerning smooth operators rely on (i) Fourier analysis and restriction to submanifolds or (ii) resolvent estimates. It seems difficult to apply Fourier analysis for our non-commutative group, so we deduce globally $H$-smooth operators from resolvent estimates which are uniform when we get close to the real axis (a {\it global Limiting Absorption Principle}). 

\smallskip
The Limiting Absorption Principle is proved in Section \ref{firtanunusus}. Globality requires techniques different but related to Mourre's commutator methods \cite{Mou,ABG}.  The model is our treatment in \cite{BKM}, directed towards spectral analysis (see also \cite{MP,Ho}). We give a full proof for the convenience of the reader and because the setting is different from that of \cite{BKM}. In particular we obtain refined estimates, involving real interpolation spaces $\A_{1/2,1}$ associated to dilations acting (as a non-unitary $C_0$-group) in a sort of homogeneous Sobolev space; see also \cite{ABG} in the different context of Mourre theory. The reader will notice that in this section one only relies on the homogeneity, the positivity and the injectivity of our operator $H$ acting in some abstract Hilbert space.

\smallskip
Still in an abstract framework, Section \ref{firtanunus} is dedicated to deducing global smooth operators ($L^2$-time-dependent estimates) from the resolvent estimates. This is first done in Theorem \ref{iernut}, in a stadard way, using the spaces involved in the Limiting Absorption Principle as it was proven in the previous section. As said above, these spaces $\A_{1/2,1}$ are defined by dilations acting in a suitable homogeneous Sobolev-type space. In Lemma \ref{silistea} is shown how to replace them by simpler real interpolation spaces $\mathscr A_{1/2,1}$ of type $(1/2,1)$ between the basic Hilbert space $\H$ and the domain of the generator of the unitary dilation group in $\H$\,. As a consequence, the $H$-smooth operators have the form $V|H|^{1/2}$ with $V$ an operator not depending on $H$\,. Corollary \ref{lipia} gives conditions on an operator to be $H$-smooth, in terms of a Littlewood-Paley realization or a resolvent description of the simpler interpolation space. Then, for the sake of simplicity, we replace the real interpolation spaces of order $(1/2,1)$ by a complex interpolation space of order $1/2+\varepsilon$\,; this is the origin of the parameter $\th>1/2$ appearing in the subsequent statements as well as in \eqref{magurele} and \eqref{mugurele}. 

\smallskip
Section \ref{fralticeni} recalls some basic facts about graded groups, their dilation structure and their Rockland operators. It is not always easy to decide when a formally given operator is admissible for our treatment, i.\,e. self-adjoint (on some convenient domain), injective and homogeneous. We indicate some classes of examples.

\smallskip
After particularization to the group context, the results are still not so easy to grasp, because they are stated in terms of the generator of dilations. So in Section \ref{falticeni} we weaken them and put into evidence $H$-smooth operators that, besides the factor $|H|^{1/2}$, contain products of a multiplication operator and a suitable function of $|D|$\,. In particular one gets the estimation \eqref{mugurele}.
The setting requires a quite deep result \cite{CJ} on interpolation of analytic families of operators. For operators $H$ which are, in addition, left invariant, an improvement of the $H$-smooth operators is subject of Corollary \ref{frumusani}. It involves a weighted integration over left translations of the function $(1+[x])^{-\th\nu_N}$.

\smallskip
In the final section  the Lie group is stratified, so sublaplacians (particular cases of positive homogeneous Rockland operator) are available. The main new fact is the existing of certain Hardy-type inequalities proven by Ciatti, Cowling and Ricci \cite{CCR}, that allows a sharper approach. In this way one gets the $H$-smoothness of the operators \eqref{fundulea}.

\section{Global resolvent estimates for homogeneous operators}\label{firtanunusus}

To a self-adjoint operator $H$ in a Hilbert space $\H$ we associate canonically a scale of Hilbert spaces $\big\{\mathcal G^{\si}\!\mid\!\si\in\R\big\}$\,, where $\mathcal G^\si$ is the domain of $|H|^{\si/2}$ (constructed via the functional calculus) with scalar product
\begin{equation}\label{pitesti}
\<v,w\>_{\mathcal G^\si}:=\<v,w\>+\big<|H|^{\si/2}v,|H|^{\si/2}w\big>\,.
\end{equation}
With this convention one has ${\sf Dom}(H)=\mathcal G^2$\,, $\H=\mathcal G^0$, and $\mathcal G^1$ is the form-domain of $H$\,. One could call the spaces $\mathcal G^\si$ {\it the abstract Sobolev spaces associated to the operator $H$}\,; we will be interested only in the interval $\si\in[-2,2]$\,. In the usual way, $\mathcal G^{-\si}$ can be identified with the topological anti-dual of $\mathcal G^\si$ and the operator $H$ can be seen as a symmetric element of $\mathbb B(\mathcal G^{1},\mathcal G^{-1})$\,. 

\smallskip
Let us assume now that the self-adjoint operator $H$ is also injective. We define {\it the homogeneous abstract Sobolev spaces associated to $H$}, denoted by $\,\dot{\mathcal G}^\si$, $\si\in\R$\,. For positive $\si$\,, the space $\dot{\mathcal G}^\si$ will be the completion of $\mathcal G^\si$ in the norm deduced from the scalar product
\begin{equation}\label{huedin}
\<v,w\>_{\dot{\mathcal G}^\si}:=\big<|H|^{\si/2}v,|H|^{\si/2} w\big>\,.
\end{equation}
For negative $\si$ one uses the same formula for the scalar product to complete the space $|H|^{-\si}\mathcal G^{-\si}\subset\mathcal G^{\si}$.
To simplify notations, we set $\mathcal G:=\mathcal G^1$ and $\mathcal G^*:=\mathcal G^{-1}$, as well as $\dot{\mathcal G}:=\dot{\mathcal G}^1$ and $\dot{\mathcal G}^*:=\dot{\mathcal G}^{-1}$\,. 

\smallskip
Assume now that we are given an unitary and strongly continuous group $W$ in $\H$ such that $W(t)\mathcal G^2\subset\mathcal G^2$ for every $t\in\R$\,. Then, by suitable restrictions or extensions (justified by duality and interpolation), it defines naturally $C_0$-groups in all the Hilbert spaces $\mathcal G^\si$ and $\dot{\mathcal G}^\si$ with $\si\in[-2,2]$\,. Generally, they are no longer unitary. We will be particularly interested in the one leaving in $\dot{\mathcal G}^*=\dot{\mathcal G}^{-1}$, denoted by the same symbol $W$ for simplicity. Its infinitesimal generator, also denoted by $A$\,, is a closed densely defined operator in the Hilbert space $\dot{\mathcal G}^*$. The domain $\A\equiv{\sf Dom}(A;\dot{\mathcal G}^*)$ is endowed with the graph scalar product
\begin{equation}\label{suceava}
\<v,w\>_\A:=\<v,w\>_{\dot{\mathcal G}^*}\!+\<Av,Aw\>_{\dot{\mathcal G}^*}
\end{equation}
and its corresponding graph norm given by
\begin{equation}\label{sibiu}
\p\!v\!\p_\A^2\,=\,\p\!v\!\p_{\dot{\mathcal G}^*}^2\!+\p\!Av\!\p_{\dot{\mathcal G}^*}^2=\,\big\Vert |H|^{-1/2}v\big\Vert^2\!+\big\Vert |H|^{-1/2}Av\big\Vert^2.
\end{equation}
One gets a pair of Hilbert spaces $(\dot{\mathcal G}^*\!,\A)$\,, with $\A\hookrightarrow\dot{\mathcal G}^*$ (dense continuous embedding), to which real or complex interpolation can be applied.

\smallskip
Real interpolation spaces are usually defined via the $K$-method \cite{BL} but, due to the concrete feature that $\A$ is the domain of the generator of the $C_0$-group $W$ in the Hilbert space $\dot{\mathcal G}^*$, we prefer to introduce them via another admissible norm, cf. \cite[Sect.\; 2.7]{ABG} for instance. Considering only the relevant case $(\th,p)=(1/2,1)$\,, we set
\begin{equation}\label{napoca}
\big(\dot{\mathcal G}^*\!,\A\big)_{\frac{1}{2},1}\equiv\A_{1/2,1}:=\Big\{v\in\dot{\mathcal G}^*\,\Big\vert\,\int_0^1\!\tau^{-\frac{1}{2}}\!\p\!W(\tau)v-v\!\p_{\dot{\mathcal G}^*}\!\!\frac{d\tau}{\tau}<\infty\Big\}\,,
\end{equation}
which is a Banach space with the norm ($\ep_0>0$)
\begin{equation}\label{apahida}
\p\!v\!\p_{\A_{1/2,1}}:=\,\p\!v\!\p_{\dot{\mathcal G}^*}\!+\int_0^{\ep_0}\!\tau^{-\frac{1}{2}}\!\p\!W(\tau)v-v\!\p_{\dot{\mathcal G}^*}\!\!\frac{d\tau}{\tau}\,.
\end{equation}

\begin{Remark}\label{husi}
{\rm As a rule, we will work in the sense of Banachisable (or Hilbertisable) spaces, i.e., even if a norm (or a scalar product) is used to define a Banach space, we admit without many comments equivalent norms. All the expressions \eqref{apahida}, for different values of $\ep_0>0$\,, define equivalent scalar products. In Theorem \ref{maine} and its proof we will use a number $\ep_0$ conveniently defined. 
}
\end{Remark}

\begin{Remark}\label{costinesti}
{\rm In the proof of Theorem \ref{maine} we will need another point of view on the interpolation space $\A_{1/2,1}$\,, connected to the trace method \cite{BL,ABG}\,; we collect here just the useful part of the information, see for instance \cite[Prop.\;2.3.3]{ABG}: 
A vector $u\in\big(\dot{\mathcal G}^*\!,\A\big)_{1/2,1}$ can be approximated by a smooth family $\big\{u_\ep\mid\ep\in(0,\ep_0)\big\}\subset\A$
with a $C^1$-dependence on $\ep$ in the weaker norm $\p\!\cdot\!\p_{\dot{\mathcal G}^*}$ such that
\begin{equation}\label{ludus}
\int_0^{\ep_0}\Big[\p\!u_\tau'\!\p_{\dot{\mathcal G}^{*}}\!+\p\!Au_\tau\!\p_{\dot{\mathcal G}^{*}}\!\!\Big]\,\frac{d\tau}{\sqrt\tau}\le c\p\!u\!\p_{\A_{1/2,1}}
\end{equation}
and
\begin{equation}\label{faurei}
\p\!u-u_\ep\!\p_{\dot{\mathcal G}^{*}}\le c\,\ep^{1/2}\!\p\!u\!\p_{\A_{1/2,1}}\,.
\end{equation}
The positive constant $c$ is independent of $u$ and $\ep$\,.
}
\end{Remark}

\begin{Definition}\label{mangalia}
Let $H$ be a self-adjoint operator in the Hilbert space $\H$\,, with domain $\mathcal G^2$\,. Let $\big\{W(t)=e^{itA}\mid t\in\R\big\}$ be an unitary strongly continuous group of operators in $\H$ with infinitesimal generator $A$\,. We say that $H$ {\rm is homogeneouos of order $\alpha\in\R$ with respect to $W$ (or with respect to $A$)} if
\begin{itemize}
\item
$\,W(t)\mathcal G^2\subset\mathcal G^2\,,\quad\forall\,t\in\R\,,$
\item
$\,W(-t)HW(t)=e^{\alpha t}H\,,\quad\forall\,t\in\R\,.$
\end{itemize}
\end{Definition}

A direct consequence is the commutator relation 
\begin{equation}\label{venus}
i[H,A]=i\alpha H.
\end{equation} 
One way to understand the left hand side is to say that it is a (suitable type of derivative) in $t=0$ of the function $t\mapsto W(-t)HW(t)$\,.
 For us, the useful (and consistent) way is to say that 
 \begin{equation}\label{saturn}
 \<i[H,A]u,v\>:=i\<Hu,Av\>-i\<Au,Hv\>
 \end{equation}
 if $u,v$ belong to the domain ${\sf Dom}\big(A;\mathcal G^2\big)$ of the infinitesimal generator of $W$ acting in $\mathcal G^2$. With a suitable reinterpretation of the duality $\<\cdot,\cdot\>$\,, this will be written for $u,v\in\A$\,. 

For convenience, we include a diagram describing the relations between various Banach spaces. The arrows are linear, continuous and dense embeddings. The star denotes the topological anti-dual (or a space isomorphic to it). All the spaces with no subscript are actually Hilbert.
\begin{equation*}\label{patrat}
\begin{diagram}
\node{}\node{\mathcal G^2} \arrow{e}\node{\mathcal G=\mathcal G^1} \arrow{e} \arrow{s}\node{\mathcal H} \arrow{e}\node{\mathcal G^*=\mathcal G^{-1}}\arrow{e}\node{\mathcal G^{-2}}\\ 
\node{\A^*}\node{\A_{1/2,1}^*} \arrow{w}\node{\dot{\mathcal G}}\arrow{w} \node{}\node{\dot{\mathcal G}^*}\arrow{n}\node{\A_{1/2,1}}\arrow{w}\node{\A}\arrow{w}
\end{diagram}
\end{equation*}

From the embedding $\A_{1/2,1}\hookrightarrow\mathcal G^*$ and the fact that, for $\im z\ne 0$\,, $(H-z)^{-1}$ makes sense as a bounded operator $:\mathcal G^*\to\mathcal G$\,, we see that the relation \eqref{bucuresti} below makes sense, the bracket $\<\cdot,\cdot\>$ meaning the natural duality between $\mathcal G$ and $\mathcal G^*$. We are going to use a single notation for various compatible duality brackets.

\begin{Theorem}\label{maine}
Let $H$ be an injective self-adjoint operator in the Hilbert space $\H$\,, homogeneous of order $\alpha>0$ with respect to the generator $A$ of the strongly continuous unitary group $W$\,. There exists $C>0$ such that for every $u\in\A_{1/2,1}$ one has
\begin{equation}\label{bucuresti}
\sup_{\lambda\in\R}\,\sup_{\mu>0}\big\vert\big\<(H-\lambda\mp i\mu)^{-1}u,u\big\>\big\vert\le C\!\p\!u\!\p_{\A_{1/2,1}}^2.
\end{equation}
\end{Theorem}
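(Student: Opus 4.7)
The proof is a global Limiting Absorption Principle marrying the homogeneity of $H$—which collapses the two-parameter family $z=\lambda\pm i\mu$ to a single scale via dilations—with a Virial/Mourre commutator argument on the reduced scale, in the spirit of \cite{BKM}. Functional calculus applied to the relation in Definition \ref{mangalia} yields the resolvent conjugation $W(-t)(H-z)^{-1}W(t)=e^{-\alpha t}(H-e^{-\alpha t}z)^{-1}$ for $z\notin\R$. Because $W(-t)|H|^{-1/2}W(t)=e^{-\alpha t/2}|H|^{-1/2}$, the group $W$ acts on $\dot{\mathcal G}^{*}$ as a $C_0$-group of norm $e^{-\alpha t/2}$; since $W$ commutes with its own generator $A$, the same estimate transfers to $\A$ and, by interpolation, to $\A_{1/2,1}$. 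Choosing $t$ with $e^{\alpha t}=|z|$ reduces \eqref{bucuresti} to the case $|z|=1$ with a universal constant.

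\textbf{Commutator identity on $\A$.} For $u\in\A$ the homogeneity $[H,A]=\alpha H$ combined with $HR(z)={\sf Id}+zR(z)$ (where $R(z):=(H-z)^{-1}$) gives $[A,R(z)]=\alpha R(z)+\alpha zR(z)^{2}$. Pairing with $u$ and antisymmetrizing produces a Virial-type identity
\[\alpha\,\langle R(z)u,u\rangle+\alpha z\,\langle R(z)^{2}u,u\rangle\,=\,\langle R(z)u,Au\rangle-\langle Au,R(z)u\rangle\,.\]
Combined with $\|R(z)u\|^{2}=\im\langle R(z)u,u\rangle/\mu$ and a similar rewriting of $\langle R(z)^{2}u,u\rangle$ in terms of single-resolvent expectations, and using that on $|z|=1$ the commutator $\alpha H$ is uniformly non-degenerate on the spectral window relevant to $R(z)$, I extract the a priori bound $|\langle R(z)u,u\rangle|\le C\|u\|_{\A}^{2}$.

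\textbf{Upgrade to $\A_{1/2,1}$ via the trace method.} For a general $u\in\A_{1/2,1}$, I use the $C^1$-family $\{u_{\varepsilon}\}\subset\A$ of Remark \ref{costinesti}. Writing
\[\langle R(z)u_{\varepsilon},u_{\varepsilon}\rangle\,=\,\langle R(z)u_{\varepsilon_0},u_{\varepsilon_0}\rangle+\int_{\varepsilon}^{\varepsilon_0}2\re\,\langle R(z)u_{\tau}',u_{\tau}\rangle\,d\tau\,,\]
I apply the previous step to the boundary $u_{\varepsilon_0}\in\A$ and bound the integrand by pairing $R(z)u_{\tau}'\in\dot{\mathcal G}$ against $u_{\tau}\in\dot{\mathcal G}^{*}$. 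The weight $\tau^{-1/2}d\tau/\tau$ in \eqref{ludus} is tuned exactly so as to recover the $\A_{1/2,1}$-norm of $u$, while \eqref{faurei} controls the limit $\varepsilon\downarrow 0$, delivering \eqref{bucuresti}.

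\textbf{Main obstacle.} The delicate point is the absorption in the commutator step: the Virial identity produces $\langle R(z)^{2}u,u\rangle$, a second-order resolvent power a priori of order $1/\mu^{2}$, which has to be rearranged into first-order quantities absorbable on the left without losing uniformity as $\mu\downarrow 0$. This closes precisely because the scaling step constrains $|z|=1$, keeping the commutator $\alpha H$ uniformly non-degenerate on the spectral window relevant to $R(z)$ and away from the degeneracy at the origin. A secondary technicality is that $H$ need not be sign-definite, so the commutator argument must be run on the positive and negative spectral halves of $H$ separately, with the injectivity of $H$ ensuring they jointly exhaust $\H$.
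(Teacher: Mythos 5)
Your scaling observation is correct and is genuinely different from the paper's route: from $W(-t)(H-z)^{-1}W(t)=e^{-\alpha t}(H-e^{-\alpha t}z)^{-1}$ and the fact that $W(t)$ has norm $e^{-\alpha t/2}$ on $\dot{\mathcal G}^*$ and on $\A$ (hence on $\A_{1/2,1}$ by exactness of real interpolation), the prefactor $|z|^{-1}$ is exactly compensated and \eqref{bucuresti} does reduce to $|z|=1$ (the paper never scales; its argument is uniform in $\lambda\in\R$, $\mu>0$ directly). However, the two steps you then rely on are precisely where the difficulty of the theorem lives, and neither is carried out. The local bound $|\langle R(z)u,u\rangle|\le C\Vert u\Vert_{\A}^2$, uniform as $\mu\downarrow 0$ on $|z|=1$, is asserted rather than proved: your virial identity produces $\alpha z\langle R(z)^2u,u\rangle$, and writing $\langle R(z)^2u,u\rangle=\langle R(z)u,R(\bar z)u\rangle$ together with $\Vert R(z)u\Vert^2=\mu^{-1}\im\langle R(z)u,u\rangle$ only bounds it by $\mu^{-1}|\langle R(z)u,u\rangle|$, which dominates the left-hand side as $\mu\downarrow0$; the identity does not close, exactly as you concede under ``main obstacle,'' and no mechanism is supplied to overcome this. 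Invoking the standard Mourre theorem near $\lambda=\pm1$ would be a legitimate substitute, but it delivers the LAP in the Besov space built from $(\H,{\sf Dom}(A;\H))$, not in $\A_{1/2,1}=(\dot{\mathcal G}^*,\A)_{1/2,1}$; converting one into the other near $|z|=1$ requires smooth spectral localizations $\varphi(H)$ and control of $[A,\varphi(H)]$ (sharp projections $E_I(H)$ do not commute with $A$, since $A$ dilates the spectrum), none of which appears in your sketch.

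The second gap is the upgrade to $\A_{1/2,1}$: bounding the integrand $|\langle R(z)u'_\tau,u_\tau\rangle|$ by pairing $\dot{\mathcal G}$ against $\dot{\mathcal G}^*$ requires $\Vert R(z)u'_\tau\Vert_{\dot{\mathcal G}}$, i.e. a bound on $\Vert R(z)\Vert_{\mathbb B(\dot{\mathcal G}^*,\dot{\mathcal G})}$, which is exactly what is \emph{not} uniformly available as $\mu\downarrow0$; moreover nothing in your integrand generates the weight $\tau^{-1/2}$ that \eqref{ludus} is designed to absorb. In the paper this is precisely the role of the complex deformation $H_\ep^\mp=e^{\mp i\alpha\ep}H_>+e^{\pm i\alpha\ep}H_<$: its artificial dissipation $\sin(\alpha\ep)|H|$ yields $\Vert F_\ep u_\ep\Vert_{\dot{\mathcal G}}\le 2(\alpha\ep)^{-1/2}|\langle F_\ep u_\ep,u_\ep\rangle|^{1/2}$ (see \eqref{caracal}), a $\dot{\mathcal G}$-bound carrying exactly the $\ep^{-1/2}$ factor matched to the trace-method weight, after which a differential inequality in $\ep$ and a Gronwall-type lemma give the uniform bound. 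So the interpolation-space upgrade cannot be bolted on after a fixed-$z$ commutator estimate; it has to be interwoven with a deformation or differential-inequality mechanism coupling the regularization parameter to the resolvent estimate, and this essential idea is missing from your proposal.
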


\begin{Remark}\label{colibasi}
{\rm Of course, such a result is impossible for $\lambda\in{\sf sp}(H)$ and general $u\in\H$\,. The need to get an estimate uniform in $\lambda$ makes Mourre's commutator method \cite{Mou,ABG} unsuited. Uniformity in $\lambda$ is critical for obtaining {\it globally} $H$-smooth operators in Section \ref{firtanunus}. 
}
\end{Remark} 

\begin{Remark}\label{filiasi}
{\rm The spectrum of a homogeneous operator $H$ can only be one of the sets $\{0\}$\,, $\R$\,, $[0,\infty)$ or $(-\infty,0]$\,. From the relation $\,W(-t)HW(t)=e^{\alpha t}H$ it follows that $\,{\sf sp}(H)={\sf sp}(e^{\alpha t}H)=e^{\alpha t}{\sf sp}(H)$\,. Since this is true for every $t\in\R$\,, the assertion follows easily. But we obtain more. A standard consequence of the Limiting Absorption Principle stated in Theorem \ref{maine} is the fact that, if $H$ is injective, {\it it has only purely absolutely continuous spectrum}.
}
\end{Remark}

We are now going to prove Theorem \ref{maine} by using a modification of {\it the method of differential inequalities}, invented by E. Mourre \cite{Mou} (see also \cite{ABG,BKM}). Some arguments will rely on positivity, and this requires some preparations. If, besides self-adjointness and homogeneity of $H$, the reader accepts {\it positivity} as an assumption, the proof below would be somehow easier: in the next constructions one could just take $H=H_>$ and some computations will be simpler. For the general case, let us decompose $H$ in a positive and a negative part, writting 
\begin{equation}\label{constanta}
H=H_>\!+H_<=HE_H(0,\infty)+HE_H(-\infty,0)\equiv HE_>\!+HE_<\,.
\end{equation}
Here $E_H$ is the spectral measure of $H$\,; since $H$ is assumed injective, one has $E_H(\{0\})=0$ and clearly $|H|=H_>\!-H_<$\,. Setting $\H_\pm:=E_\pm\H$ one gets an orthogonal decomposition $\H=\H_>\oplus\H_<$ which is invariant under the functional calculus of $H$. 

\smallskip
From homogeneity we deduce $e^{-itA}E_H(B)e^{itA}=E_H\big(e^{\alpha t}B\big)$ for every $t\in\R$ and every real Borel set $B$\,. Since the sets $(-\infty,0)$ and $(0,\infty)$ are invariant under dilations, we see that the spaces $\H_>$ and $\H_<$ are also invariant under the group generated by $A$\,.
In the proof of Theorem \eqref{maine} we are going to use the decomposition $A=A_>\!+A_<$ corresponding to the invariant direct sum $\H=\H_>\!\oplus\H_<$ and the operator ${\sf A}:=A_>\!-A_<$\,, as well as simple commutator computations
\begin{equation}\label{navodari}
\big[A_>\,,H_>\big]=iH_>\,,\quad\big[A_<\,,H_<\big]=iH_<\,,\quad\big[A_>\,,H_<\big]=0\,,\quad\big[A_<\,,H_>\big]=0\,,
\end{equation}
based on homogeneity and definitions.

\begin{proof}
1. For positive $\ep$ we set
\begin{equation*}\label{mamaia}
H_\ep^\mp:=e^{\mp i\alpha\epsilon}H_>+e^{\pm i\alpha\epsilon}H_<\,.
\end{equation*}
Let us show that for every $\mu>0$ and $\ep\in[0,\pi/\alpha]$ the two operators $\,H_\ep^\mp-\lambda\mp i\mu\,$ are isomorphisms $:\mathcal G^{2}\to\H$\,. We first check that they are bijections. One computes for $v\in\mathcal G^{2}$ 
\begin{equation}\label{oravita}
\begin{aligned}
\big\Vert\big(H_\ep^\mp-\lambda\mp i\mu\big)v\big\Vert^2&=\big\Vert\big(H_\ep^\mp-\lambda\big)v\big\Vert^2+\mu^2\!\p\!v\!\p^2+\,2\re\big\<\big(e^{\mp i\alpha\epsilon}H_>\!+e^{\pm i\alpha\epsilon}H_<\!-\lambda\big)v,\mp i\mu v\big\>\\
&\ge\mu^2\!\p\!v\!\p^2+\,2\re\big[e^{\mp i\alpha\epsilon}(\pm i\mu)\big]\big\<H_>v, v\big\>+2\re\big[e^{\pm i\alpha\epsilon}(\pm i\mu)\big]\big\<H_<v, v\big\>\big]\\
&=\mu^2\!\p\!v\!\p^2\mp\,2\mu\sin(\mp\alpha\ep)\big\<H_>v, v\big\>\mp\,2\mu\sin(\pm\alpha\ep)\big\<H_<v, v\big\>\\
&=\mu^2\!\p\!v\!\p^2\pm\,2\mu\sin(\pm\alpha\ep)\big[\big\<H_>v, v\big\>-\big\<H_<v, v\big\>\big]\\
&=\mu^2\!\p\!v\!\p^2+2\mu\sin(\alpha\ep)\big\<|H|v, v\big\>\ge\mu^2\!\p\!v\!\p^2,
\end{aligned}
\end{equation}
which proves injectivity. Since the two closed operators are adjoint to each other, their ranges are dense. By the computation \eqref{oravita}, their inverses (defined on the ranges, respectively, and seen as operators in $\H$) are bounded. But the inverse of a closed operator is also closed. Then, by the Closed Graph Theorem, their ranges are closed in $\H$\,, so they coincide with $\H$ and the isomorphism is proven.

\smallskip
2. From \eqref{oravita} one clearly gets 
\begin{equation}\label{pantelimon}
\big\Vert(H_\ep^\mp-\lambda\mp i\mu)^{-1}\big\Vert_{\mathbb B(\H)}\le 1/\mu\,.
\end{equation} 
Later on we will need a version of this inequality which reads
\begin{equation}\label{tulcea}
\big\Vert(H_\ep^\mp-\lambda\mp i\mu)^{-1}\big\Vert_{\mathbb B(\mathcal G^*\!,\mathcal G)}\le c(\lambda,\mu)\,,\quad{\rm uniformly\ in\ } \ep\ge 0\,.
\end{equation}
If we prove this with $\p\!\cdot\!\p_{\mathbb B(\mathcal G^*\!,\mathcal G)}$ replaced by $\p\!\cdot\!\p_{\mathbb B(\H,\mathcal G^2)}$\,, by duality and interpolation one gets \eqref{tulcea} easily. For this it is enough to control $\big\Vert H(H_\ep^\mp-\lambda\mp i\mu)^{-1}\big\Vert_{\mathbb B(\H)}$\,. Using the invariant decomposition $\H=\H_>\!\oplus\H_<$\,, this reduces to the control of $\big\Vert e^{\mp{i\alpha\ep}}H_>(e^{\mp{i\alpha\ep}}H_>-z^\mp{\sf Id}_>)^{-1}\big\Vert_{\mathbb B(\H_>)}$ and of a similar tem, with $>$ replaced by $<$\,. The identity $K(K-z)^{-1}={\sf Id}+z(K-z)^{-1}$ and \eqref{pantelimon} finishes the proof; we leave the details to the reader.

\smallskip
3. Starting from now, we are going to treat the sign $-$\,; the sign $+$ is similar. Let us define 
\begin{equation*}\label{iasi}
F_\epsilon\equiv F_\epsilon(\lambda,\mu):=\big(e^{-i\alpha\epsilon}H_>+e^{i\alpha\epsilon}H_<-\lambda-i\mu\big)^{-1}\in\mathbb B\big(\H,\mathcal G^{2})\,.
\end{equation*}
By transposition and interpolation, it can be seen as well-defined element of $\,\mathbb B\big(\mathcal G^{*},\mathcal G\big)\subset\mathbb B\big(\dot{\mathcal G}^{*}\!,\dot{\mathcal G}\big)$\,.
In the next computations, $\<\cdot,\cdot\>$ will denote the duality form between $\dot{\mathcal G}$ and $\dot{\mathcal G}^{*}$. Then, for a sequence $\{u_\epsilon\}_{\ep\in(0,\ep_0)}\subset\A$ as in Remark \ref{costinesti}, set 
\begin{equation*}\label{cluj}
f_\epsilon\equiv f_\epsilon(\lambda,\mu;u_\ep):=\big<F_\epsilon u_\ep,u_\ep\big>\,.
\end{equation*}
Using the basic rules of derivation and relations \eqref{navodari} one can write
$$
\begin{aligned}
\frac{d}{d\epsilon}f_\epsilon&=\big<F_\epsilon\big(i\alpha e^{-i\alpha\epsilon}H_>-i\alpha e^{i\alpha\epsilon}H_<\big)F_\epsilon u_\ep,u_\ep\big>+\big<F_\epsilon u'_\ep,u_\ep\big>+\big<F_\epsilon u_\ep,u'_\ep\big>\\
&=\big<F_\epsilon\big(e^{-i\alpha\epsilon}[A_>,H_>]-e^{i\alpha\epsilon}[A_<,H_<]\big)F_\epsilon u_\ep,u_\ep\big>+\big<F_\epsilon u'_\ep,u_\ep\big>+\big<F_\epsilon u_\ep,u'_\ep\big>\\
&=-\big<F_\epsilon[ e^{-i\alpha\epsilon}H_>\!+e^{i\alpha\epsilon}H_<\!-\lambda-i\mu,A_>\!-A_<]F_\epsilon u_\ep,u_\ep\big>+\big<u'_\ep,F_\epsilon^* u_\ep\big>+\big<F_\epsilon u_\ep,u'_\ep\big>\\
&=-\big<F_\epsilon u_\ep,{\sf A}u_\ep\big>+\big<{\sf A}u_\ep,F^*_\epsilon u_\ep\big>+\big<u'_\ep,F_\epsilon ^*u_\ep\big>+\big<F_\epsilon u_\ep,u'_\ep\big>\\
&=\big<F_\epsilon u_\ep,u'_\ep-{\sf A}u_\ep\big>+\big<u'_\ep+{\sf A}u_\ep,F^*_\epsilon u_\ep\big>\,.
\end{aligned}
$$
Note that $\p\!{\sf A}u_\ep\!\p_{\dot{\mathcal G}^*}=\,\p\!Au_\ep\!\p_{\dot{\mathcal G}^*}$ (use the orthogonal decomposition $\H=\H_>\oplus\H_<$\,, with respect to which both $A$ and ${\sf A}$ are diagonal). Since $\,|\<v,w\>|\le\,\p\!v\!\p_{\dot{\mathcal G}}\p\!w\!\p_{\dot{\mathcal G}^*}$ for $v\in\dot{\mathcal G}$ and $w\in\dot{\mathcal G}^*$, it follows that
\begin{equation}\label{craiova}
\big\vert f_\ep'\big\vert\le\Big(\p\!u_\ep'\!\p_{\dot{\mathcal G}^*}\!+\p\!Au_\ep\!\p_{\dot{\mathcal G}^*}\!\Big)\Big(\big\Vert F_\epsilon u_\ep\big\Vert_{\dot{\mathcal G}}+\big\Vert F_\epsilon^* u_\ep\big\Vert_{\dot{\mathcal G}}\Big)\,.
\end{equation}

4. We can write
\begin{equation*}\label{petrosani}
\begin{aligned}
\frac{1}{2i}\big(F_\epsilon-F_\epsilon^*\big)&=\frac{1}{2i}F_\epsilon\big([e^{i\alpha\epsilon}-e^{-i\alpha\epsilon}]H_>+[e^{-i\alpha\epsilon}-e^{i\alpha\epsilon}]H_<+2i\mu\big) F_\epsilon^*\\
&\ge\sin(\alpha\epsilon)F_\epsilon (H_>\!-H_<) F_\epsilon^*=\sin(\alpha\epsilon)F_\epsilon |H| F_\epsilon^*
\end{aligned}
\end{equation*}
so, using the fact that $\,\sin\tau/\tau\,$ is close to one if $\,\tau$ is small enough, we easily get
\begin{equation*}\label{slatina}
\big\Vert F^*_\epsilon u_\ep\big\Vert_{\dot{\mathcal G}}=\big<|H| F^*_\epsilon u_\ep,F^*_\epsilon u_\ep\big>^{1/2}\le\frac{1}{\sqrt{\sin(\alpha\epsilon)}}\;\big\vert\<F_\epsilon u_\ep,u_\ep\>\big\vert^{1/2}\le\frac{2}{\sqrt{\alpha\ep}}\;\big\vert\<F_\epsilon u_\ep,u_\ep\>\big\vert^{1/2},
\end{equation*}
assuming that $\ep\in(0,\ep_0)$ with $\ep_0>0$ small enough. Actually the argument works similarly for $F_\ep$ and leads to
\begin{equation}\label{caracal}
\big\Vert F_\epsilon u_\ep\big\Vert_{\dot{\mathcal G}}\le \;\frac{2}{\sqrt{\alpha\ep}}\big\vert\<F_\epsilon u_\ep,u_\ep\>\big\vert^{1/2},\quad\forall\,\ep\in(0,\ep_0)\,.
\end{equation}

5. Replacing \eqref{caracal} in \eqref{craiova} one gets the differential inequality
\begin{equation*}\label{calafatt}
\big\vert f_\epsilon'\big\vert\le\,\frac{4}{\sqrt\alpha}\,\frac{\p\!u_\ep'\!\p_{\dot{\mathcal G}^*}+\p\!Au_\ep\!\p_{\dot{\mathcal G}^*}}{\sqrt\ep}\,|f_\ep|^{1/2},\quad\forall\,\ep\in(0,\ep_0)\,.
\end{equation*}
The crucial fact is that the factor multiplying $|f_\ep|^{1/2}$ is integrable on $[0,\ep_0)$\,. A version of Gronwall's Lemma \cite[Th.5]{Dr} implies that the limit $f_0:=\lim_{\ep\to 0} f_\ep$ exists and satisfies the explicit inequality
\begin{equation*}\label{lupeni}
\big\vert f_0\big\vert\le\(\big\vert f_{\epsilon_0}\big\vert^{1/2}\!+\frac{2}{\sqrt\alpha}\int_{\ep}^{\ep_0} \frac{\p\!u_\tau'\!\p_{\dot{\mathcal G}^*}+\p\!Au_\tau\!\p_{\dot{\mathcal G}^*}}{\sqrt\tau}\,d\tau\)^2.
\end{equation*}
Applying \eqref{ludus}, this implies
\begin{equation}\label{petrila}
\big\vert f_0\big\vert\le\(\,\big\vert f_{\epsilon_0}\big\vert^{1/2}\!+c_1\p\!u\!\p_{\A_{\frac{1}{2},1}}\)^2.
\end{equation}

6. Now we estimate the first term in the r.h.s. of \eqref{petrila}. From \eqref{caracal} one gets immediately
\begin{equation*}\label{sadova}
\big\Vert F_\epsilon u_\ep\big\Vert_{\dot{\mathcal G}}\le\frac{4}{\alpha\ep}\p\!u_\ep\!\p_{\dot{\mathcal G}^*}.
\end{equation*}
On the other hand, by \eqref{faurei},
\begin{equation*}\label{arad}
\p\!u_\ep\!\p_{\dot{\mathcal G}^*}\le\,\p\!u_\ep-u\!\p_{\dot{\mathcal G}^*}\!\!+\!\p\!u\!\p_{\dot{\mathcal G}^*}\le c\,\ep^{1/2}\!\p\!u\!\p_{\A_{\frac{1}{2},1}}\!\!+\!\p\!u\!\p_{\A_{\frac{1}{2},1}}\!.
\end{equation*}
So
\begin{equation*}\label{ostravita}
\big\vert f_{\epsilon_0}\big\vert^{1/2}\le\big\Vert F_{\epsilon_0}\big\Vert_{\mathbb B(\dot{\mathcal G}^*,\dot{\mathcal G})}^{1/2}\p\!u_{\ep_0}\!\p_{\dot{\mathcal G}^*}\le c_3(\ep_0)\p\!u\!\p_{\A_{\frac{1}{2},1}}\,,
\end{equation*}
which replaced into \eqref{petrila} leads to
\begin{equation}\label{oradea}
\big\vert f_0\big\vert\le C\!\p\!u\!\p_{\A_{\frac{1}{2},1}}^2\,,
\end{equation}
uniformly in $\lambda\in\R$ and $\mu>0$\,.

\smallskip
7. We now compare $\,f_0=\lim_{\ep\to 0}\big<F_\epsilon u_\ep,u_\ep\big>$ with $\,\<F_0u,u\>=\big\<(H-\lambda-i\mu)^{-1}u,u\big\>$\,. One writes
\begin{equation*}\label{turnu}
\begin{aligned}
\big\vert\big<F_\epsilon u_\ep,u_\ep\big>-\big<F_0 u,u\big>\big\vert&\le\Vert F_{0}\big\Vert_{\mathbb B(\dot{\mathcal G}^*\!,\dot{\mathcal G})}\!\p\!u_\ep-u\!\p_{\dot{\mathcal G}^*}\!\!\big(\!\p\!u\!\p_{\dot{\mathcal G}^*}+\p\!u_\ep\!\p_{\dot{\mathcal G}^*}\!\!\big)\!+\Vert F_{\epsilon}-F_0\big\Vert_{\mathbb B(\dot{\mathcal G}^*\!,\dot{\mathcal G})}\!\p\!u_\ep\!\p_{\dot{\mathcal G}^*}^2\\
&\le\Vert F_{0}\big\Vert_{\mathbb B(\mathcal G^*\!,\mathcal G)}\!\p\!u_\ep-u\!\p_{\dot{\mathcal G}^*}\!\!\big(\!\p\!u\!\p_{\dot{\mathcal G}^*}+\p\!u_\ep\!\p_{\dot{\mathcal G}^*}\!\big)\!+\Vert F_{\epsilon}-F_0\big\Vert_{\mathbb B(\mathcal G^*\!,\mathcal G)}\!\p\!u_\ep\!\p_{\dot{\mathcal G}^*}^2\!.
\end{aligned}
\end{equation*}
From \eqref{tulcea}, that we write $\Vert F_{\epsilon}\big\Vert_{\mathbb B(\mathcal G^*,\mathcal G)}\!\le c(\lambda,\mu)$\,, one gets for every $\epsilon\ge 0$
\begin{equation*}\label{hunedoara}
\begin{aligned}
\Vert F_{\epsilon}-F_0\big\Vert_{\mathbb B(\mathcal G^*,\mathcal G)}&\le\Vert F_{\epsilon}\big\Vert_{\mathbb B(\mathcal G^*,\mathcal G)}\big\Vert e^{-i\alpha\ep}H_>\!+e^{i\alpha\ep}H_<\!-H\big\Vert_{\mathbb B(\mathcal G,\mathcal G^*)}\Vert F_{0}\big\Vert_{\mathbb B(\mathcal G^*,\mathcal G)}\\
&\le c(\lambda,\mu)^2\big(|e^{-i\alpha\ep}-1|\p\! H_>\!\p_{\mathbb B(\mathcal G,\mathcal G^*)}+|e^{i\alpha\ep}-1|\p\! H_<\!\p_{\mathbb B(\mathcal G,\mathcal G^*)}\big)\,,
\end{aligned}
\end{equation*}
which converges to $0$ when $\ep\to\infty$\,.
Also using \eqref{faurei}, we obtain finally (non-uniformly in $\lambda,\mu$) 
\begin{equation*}\label{iulia}
f_0:=\lim_{\ep\to 0}\big<F_\epsilon(\lambda,\mu) u_\ep,u_\ep\big>=\big<F_0(\lambda,\mu) u,u\big>=\big\<(H-\lambda- i\mu)^{-1}u,u\big\>\,.
\end{equation*}
Then, since \eqref{oradea} holds uniformly in $\lambda$ and $\mu$\,, one gets
\begin{equation*}\label{satumare}
\big\vert\big\<(H-\lambda- i\mu)^{-1}u,u\big\>\big\vert\le C\!\p\!u\!\p_{\A_{\frac{1}{2},1}}^2\,,\quad\forall\,\lambda\in\R\,,\,\mu>0\,,\,u\in\A_{\frac{1}{2},1}\equiv(\dot{\mathcal G}^*\!,\A)_{\frac{1}{2},1}\,.
\end{equation*}

\end{proof}

\section{Globally smooth operators under homogeneity}\label{firtanunus}

Global smoothness has been defined in the Introduction. An equivalent definition of the $H$-smoothness of the operator $T$ is
\begin{equation*}\label{intorsatura}
\underset{\p u\p=1,\mu\ne 0}{\sup}\int_\R\,\big\Vert T(H-\lambda\mp i\mu)^{-1}u\big\Vert^2 d\lambda<\infty\,,
\end{equation*}
where it is assumed that for every $u\in\H$ and for every $\mu>0$\,, one has $(H-\lambda\mp i\mu)^{-1}u\in{\sf Dom}(T)$ for almost every $\lambda\in\R$\,.

\smallskip
Comming back to our concrete setting, it is useful to recall the dense continuous embeddings
\begin{equation}\label{medias}
\A\hookrightarrow\A_{1/2,1}\hookrightarrow\dot{\mathcal G}^*\hookrightarrow\mathcal G^*\equiv\mathcal G^{-1}\hookrightarrow\mathcal G^{-2}.
\end{equation}
It is known \cite[Th.\! XIII.25]{ReSi} that if $T$ is $H$-smooth then $\mathcal G^{2}={\sf Dom}(H)\hookrightarrow{\sf Dom}(T)$\,, hence $T$  can be seen as a bounded linear operator $T:\mathcal G^{2}\to\H$\,. Therefore, its adjoint (after extension) can also be regarded as a bounded linear operator $T^*:\H\to\mathcal G^{-2}$.
Then, taking \eqref{medias} into account, it makes sense to require that $T^*\H\subset\A_{1/2,1}$ and that, seen as a linear map $\,T^*\!:\H\to\A_{1/2,1}$\,, it is an element of $\mathbb B\big(\H,\A_{1/2,1}\big)$\,.

\begin{Theorem}\label{iernut}
Let $T$ be a densely defined operator in $\H$ such its adjoint $\,T^*\!$ extends to an element of $\,\mathbb B\big(\H,\A_{1/2,1}\big)$\,. Then $T$ is globally $H$-smooth.
\end{Theorem}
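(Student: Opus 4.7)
The plan is to combine Theorem~\ref{maine} with the classical Kato smoothness criterion via a short duality argument. First I would polarize the quadratic resolvent bound \eqref{bucuresti}: writing $R(z):=(H-z)^{-1}$ and applying the sesquilinear polarization identity to the four vectors $u\pm v$ and $u\pm iv$, the uniform bound $|\<R(\lambda\mp i\mu)u,u\>|\le C\|u\|_{\A_{1/2,1}}^2$ upgrades to a uniform bilinear estimate
\[
|\<R(\lambda\mp i\mu)u,v\>|\le 4C\,\|u\|_{\A_{1/2,1}}\|v\|_{\A_{1/2,1}},\qquad u,v\in\A_{1/2,1}.
\]
Equivalently, for every $z\in\C\setminus\R$ the resolvent $R(z)$ extends to a bounded operator $\A_{1/2,1}\to\A_{1/2,1}^*$ whose norm is uniformly bounded in $z$.

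Second, I would use the hypothesis on $T^*$. Taking the Banach-space adjoint of the bounded extension $T^*\in\B(\H,\A_{1/2,1})$ produces a bounded extension of $T$ in $\B(\A_{1/2,1}^*,\H)$ with the same norm. Composing the three maps $\H\xrightarrow{T^*}\A_{1/2,1}\xrightarrow{R(z)}\A_{1/2,1}^*\xrightarrow{T}\H$ then yields
\[
\|TR(\lambda\mp i\mu)T^*\|_{\B(\H)}\le 4C\,\|T^*\|_{\B(\H,\A_{1/2,1})}^{2},
\]
uniformly in $\lambda\in\R$ and $\mu>0$.

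Finally, I would invoke Kato's criterion. Combining the resolvent identity $R(z)-R(\bar z)=(z-\bar z)R(z)R(\bar z)$ with the elementary identity $\|AA^*\|_{\B(\H)}=\|A\|_{\B(\H)}^{2}$ gives $\|T[R(z)-R(\bar z)]T^*\|_{\B(\H)}=2|\im z|\,\|TR(z)\|_{\B(\H)}^{2}$. Hence the previous step yields $\sup_{\im z\ne 0}|\im z|\,\|TR(z)\|^2<\infty$, which is the classical Kato criterion for $H$-smoothness of $T$ (cf.\ \cite[Th.\ XIII.25]{ReSi}) and is precisely equivalent to the $L^2$-in-$\lambda$ reformulation recalled at the beginning of this section. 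The one technical subtlety I foresee is the verification that the bounded extension of $T$ obtained by duality agrees with the originally given densely defined operator on its natural domain, so that $R(\lambda\mp i\mu)u\in{\sf Dom}(T)$ for almost every $\lambda$, as required by the reformulation; this should be a routine density argument using the embedding chain \eqref{medias} and the closability of $T$, which is automatic since $T^*$ is densely defined on $\H$.
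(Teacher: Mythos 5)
Your argument is correct and is essentially the paper's proof: both rest on Theorem \ref{maine} combined with the Reed--Simon/Kato resolvent criterion \cite[Th.\;XIII.25]{ReSi}. The paper simply invokes that criterion in its sesquilinear-form version, $\sup_{z\notin\R}\big\vert\big\<\big[(H-z)^{-1}-(H-\overline z)^{-1}\big]T^*v,T^*v\big\>\big\vert<\infty$ over $v\in{\sf Dom}(T^*)$ with $\Vert v\Vert=1$, and substitutes $u=T^*v$ directly into \eqref{bucuresti}; this makes your polarization step, the dual extension of $T$ to $\mathbb B\big(\A_{1/2,1}^*,\H\big)$, and the domain-matching subtlety you flag at the end unnecessary.
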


\begin{proof}
By \cite[Th.\! 3.25]{ReSi}, the operator $T$ is $H$-smooth if and only if
\begin{equation}\label{targumures}
\sup\Big\{\big\vert\big<\big[(H-z)^{-1}-(H-\overline z)^{-1}\big]T^*v,T^*v\big>\big\vert\;\big\vert\;v\in{\sf Dom}(T^*),\,\p\!v\!\p\,=1,\,z\notin\R\Big\}<\infty\,.
\end{equation}
In Theorem \ref{maine} we obtained the uniform estimate
\begin{equation*}\label{crevedia}
\sup_{z\notin\R}\big\vert\big\<(H-z)^{-1}u,u\big\>\big\vert\le C\!\p\!u\!\p_{\A_{\frac{1}{2},1}}^2.
\end{equation*}
Setting $u=T^*v$ with $v\in\H$\,, this reads
\begin{equation}\label{tarlungeni}
\sup_{z\notin\R}\big\vert\big\<(H-z)^{-1}T^*v,T^*v\big\>\big\vert\le C\!\p\!T^*v\!\p_{\A_{\frac{1}{2},1}}^2\le C(T^*)\!\p\!v\!\p^2,
\end{equation}
which clearly implies \eqref{targumures}. Here the interpretation of $T^*$ is the one indicated before the Corollary; but if boundeness of $T^*$ with respect to the relevant topologies is granted, in \eqref{tarlungeni} it is enough to take $v\in{\sf Dom}(T^*)$\,.
\end{proof}

\begin{Remark}\label{turceni}
{\rm For writing the condition of $H$-smoothness directly on $T$, it is useful to recall the identification between the anti-dual $\A_{1/2,1}^*$ of the space $\A_{1/2,1}$ and the interpolation space $\big(\A^*,\dot{\mathcal G}\big)_{1/2,\infty}$ associated to the couple of Hilbert spaces $\dot{\mathcal G}\cong(\dot{\mathcal G}^*)^*\!\hookrightarrow\A^*$. One of the admissible norms on $\big(\A^*,\dot{\mathcal G}\big)_{1/2,\infty}$ is
\begin{equation*}\label{simeria}
w\to\sup_{\tau\in(0,1]}\Big[\frac{1}{\sqrt\tau}\p\!W(\tau)w-w\!\p_{\A^*}\!\!\Big]\,.
\end{equation*}
Let us also set $\A_{1/2,\infty}^0\equiv\big(\A^*,\dot{\mathcal G}\big)^0_{1/2,\infty}$ for the closure of $\,\dot{\mathcal G}$ (or of $\,\mathcal G$) in $\big(\A^*,\dot{\mathcal G}\big)_{1/2,\infty}$\,. It is known \cite{BL} that 
$$
\Big[\big(\A^*,\dot{\mathcal G}\big)_{1/2,\infty}^0\Big]^*\cong\big(\dot{\mathcal G}^*\!,\A\big)_{1/2,1}=:\A_{1/2,1}\,.
$$
Then every operator $\,T\in\mathbb B\big(\A_{1/2,\infty}^0,\H\big)$ is globally $H$-smooth. }
\end{Remark}

Te space $\A_{1/2,1}$ is somehow intricate; it involves the homogeneous space $\dot{\mathcal G}^*$ and the generator $A$ of a $C_0$-group in $\dot{\mathcal G}^*$ which is not unitary and, very often, not even polynomially bounded. A useful simplification is brought by the following lemma:

\begin{Lemma}\label{silistea}
Let us denote by $\mathscr A$ the domain of $A=A^*$ in the Hilbert space $\H$ (with the graph norm) and by $\mathscr A_{s,p}=(\H,\mathscr A)_{s,p}$ the real interpolation spaces associated to the couple $(\H,\mathscr A)$\,, with $s\in(0,1)$ and $p\in[1,\infty]$\,.
\begin{enumerate}
\item[(i)]
The map $|H|^{1/2}\!:\H\to\dot{\mathcal G}^*$ restricts to a linear homeomorphism $\mathscr A_{s,p}\to\A_{s,p}$\,.
\item[(ii)]
$L\in\mathbb B\big(\H,\mathscr A_{s,p}\big)$ if and only if $\,|H|^{1/2}L\in\mathbb B\big(\H,\A_{s,p}\big)$\,.
\item[(iii)]
If $L\in\mathbb B\big(\H,\mathscr A_{1/2,1}\big)$\,, then $T\!:=L^*|H|^{1/2}$ is $H$-smooth.
\end{enumerate}
\end{Lemma}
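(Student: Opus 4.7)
My plan is to prove (i) first, deduce (ii) as an immediate consequence, and then obtain (iii) by combining (ii) with Theorem \ref{iernut}. The core structural fact is that $|H|^{1/2}$ is an isometric isomorphism $\H\to\dot{\mathcal G}^*$ (essentially by the construction of $\dot{\mathcal G}^*$ as the completion of $|H|\mathcal G^1$ in the norm $\p\!|H|^{-1/2}\cdot\p$), and, via the homogeneity of $H$, it intertwines the dilation dynamics on the two spaces.

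For (i), I start from the homogeneity identity $W(-t)HW(t)=e^{\alpha t}H$. Functional calculus on the positive self-adjoint operator $|H|$ yields $W(-t)|H|^{1/2}W(t)=e^{\alpha t/2}|H|^{1/2}$, equivalently $|H|^{1/2}W(t)=e^{\alpha t/2}W(t)|H|^{1/2}$. Setting $\widetilde W(t):=e^{-\alpha t/2}W(t)$, a $C_0$-group in $\H$ with generator $A+i\alpha/2$ (same domain $\mathscr A$, equivalent graph norm), this rearranges to $|H|^{1/2}\widetilde W(t)=W(t)|H|^{1/2}$. Thus $|H|^{1/2}$ intertwines $\widetilde W$ on $\H$ with $W$ on $\dot{\mathcal G}^*$ and restricts to a linear homeomorphism $\mathscr A\to\A$ between the respective generator domains. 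Functoriality of real interpolation applied to the pairs $(\H,\mathscr A)$ and $(\dot{\mathcal G}^*\!,\A)$ then yields the homeomorphism $\mathscr A_{s,p}\to\A_{s,p}$, using that the space $\mathscr A_{s,p}$ is insensitive to whether it is built from $A$ or from the bounded perturbation $A+i\alpha/2$. Statement (ii) is then immediate, since composition with a topological isomorphism preserves membership in $\mathbb B(\H,\cdot)$.

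For (iii), (ii) provides $M:=|H|^{1/2}L\in\mathbb B\big(\H,\A_{1/2,1}\big)$, and the task is to identify $M$ with (an extension of) the adjoint $T^*$ of $T=L^*|H|^{1/2}$, so that Theorem \ref{iernut} closes the argument. Viewing $T:\mathcal G^2\to\H$ as a bounded operator (since $|H|^{1/2}:\mathcal G^2\to\mathcal G^1\hookrightarrow\H$ and $L^*:\H\to\H$ are both bounded), I will compute $T^*:\H\to\mathcal G^{-2}$ via the dual pairing: for $v\in\H$ and $w\in\mathcal G^2$,
\[
\<T^*v,w\>=\<Lv,|H|^{1/2}w\>_\H=\<|H|^{1/2}Lv,w\>_{\dot{\mathcal G}^*\!,\dot{\mathcal G}}=\<Mv,w\>_{\dot{\mathcal G}^*\!,\dot{\mathcal G}},
\]
where the middle step is the Hilbert-scale duality between $\dot{\mathcal G}$ and $\dot{\mathcal G}^*$ induced by the isometric isomorphism $|H|^{1/2}:\H\to\dot{\mathcal G}^*$. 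Density of $\mathcal G^2$ in $\dot{\mathcal G}$ then gives $T^*v=Mv$ as an element of $\dot{\mathcal G}^*$, and by (ii) actually of $\A_{1/2,1}$. Theorem \ref{iernut} concludes the $H$-smoothness of $T$.

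The main subtle point is part (i): although the intertwining is natural, one must keep careful track of the two meanings of $A$ (as self-adjoint generator in $\H$ with domain $\mathscr A$, and as generator of a non-unitary $C_0$-group in $\dot{\mathcal G}^*$ with domain $\A$), and justify that the real interpolation spaces are preserved under the scalar twist $e^{-\alpha t/2}$. For (iii), the delicate point is to interpret $L^*|H|^{1/2}$ and $|H|^{1/2}L$ through a common dual pairing, since $Lv$ need not belong to ${\sf Dom}(|H|^{1/2})=\mathcal G^1$ but only to $\mathscr A_{1/2,1}\subset\H$, so that $|H|^{1/2}Lv$ has to be read via the bounded extension $\H\to\dot{\mathcal G}^*$.
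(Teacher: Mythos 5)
Your argument is correct and follows essentially the same route as the paper: your group-level intertwining $|H|^{1/2}\widetilde W(t)=W(t)|H|^{1/2}$ with $\widetilde W(t)=e^{-\alpha t/2}W(t)$ is just the integrated form of the commutation relation $A|H|^{\pm 1/2}=|H|^{\pm 1/2}A\pm(i\alpha/2)|H|^{\pm 1/2}$ that the paper uses to show $|H|^{1/2}\!:\mathscr A\to\A$ is a homeomorphism, after which both proofs conclude by functoriality of real interpolation and Theorem \ref{iernut}. Your explicit identification of $T^*$ with the bounded extension $|H|^{1/2}L\in\mathbb B(\H,\A_{1/2,1})$ via the $\dot{\mathcal G}$--$\dot{\mathcal G}^*$ pairing merely spells out a step the paper leaves implicit.
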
 

\begin{proof}
Note first that $|H|^{1/2}:\H\to\dot{\mathcal G}^*$ is a (unitary) isomorphism. By the homogeneity relation
$$
A|H|^{\pm\frac{1}{2}}=|H|^{\pm\frac{1}{2}}A\pm(i\alpha/2)|H|^{\pm\frac{1}{2}}
$$
and the explicit graph norms, one shows immediately that 
$$
{\sf Dom}(A;\H)=\mathscr A\overset{|H|^{\frac{1}{2}}}{\longrightarrow}\A={\sf Dom}(A;\dot{\mathcal G}^*)
$$
is a linear homeomorphism. Then the assertion (i) follows from real interpolation and (ii) is a direct consequence.

\smallskip
The point (iii) follows from (ii) (with $(s,p)=(1/2,1)$) and Theorem \ref{iernut}.
\end{proof}

So we are reduced to find interesting elements of $\mathbb B\big(\H,\mathscr A_{1/2,1}\big)$\,. Since the group generated by $A$ in $\H$ is unitary, we have two other characterizations of the space $\mathscr A_{1/2,1}$\,; the first one is of Littlewood-Paley type and the second involves the resolvent of the self-adjoint operator $A$\,. 

\begin{enumerate}
\item[(i)]
Let $0<a<b<\infty$\,, let $\eta\in C^\infty_0(\R)$ such that $\eta(\lambda)>0$ if $|\lambda|\in(a,b)$ and $\eta(\lambda)=0$ if $|\lambda|\notin(a,b)$ and let $\zeta\in C^\infty_0(\R)$ with $\,\zeta(\lambda)=1$ if $\lambda\in[-b,b]$\,. Then
\begin{equation}\label{nuci}
v\to\,\p\!\zeta(A)v\!\p+\int_0^1\!\p\!\eta(\lambda A)v\!\p\frac{\d\lambda}{\lambda^2}
\end{equation}
defines an equivalent norm on $\mathscr A_{1/2,1}$\,. In particular, different choices of the functions $\eta$ and $\zeta$ lead to equivalent norms. In \eqref{nuci}, $\eta(\lambda A)$ is defined by through the functional calculus for the self-adjoint operator $\lambda A$\,.
\item[(ii)]
There is also an admissible norm on $\mathscr A_{1/2,1}$ in terms of the resolvent of $A$\,:
\begin{equation*}\label{barlad}
\begin{aligned}
\p\!v\!\p'_{1/2,1}\,:=\;\p\!v\!\p+\int_0^1\tau^{1/2}\big\Vert A({\sf Id}-i\tau A)^{-1}v\big\Vert\frac{d\tau}{\tau}\,.
\end{aligned}
\end{equation*}
\end{enumerate}

\begin{Corollary}\label{lipia}
Let $L\in\mathbb B(\H)$ an operator satisfying one of the two (equivalent) conditions:
\begin{enumerate} 
\item[(i)] For some choices $\eta,\zeta$ as above and for some positive constant $C$, one has
\begin{equation*}\label{caldarusani}
\int_0^1\!\p\!\eta(\lambda A)Lu\!\p\frac{\d\lambda}{\lambda^2}\le C\!\p\!u\!\p\,,\quad\forall u\in\H\,.
\end{equation*}
\item[(ii)]
\begin{equation*}\label{caldarushani}
\int_0^1\tau^{1/2}\big\Vert A({\sf Id}-i\tau A)^{-1}Lu\big\Vert\frac{d\tau}{\tau}\le C\!\p\!u\!\p\,,\quad\forall u\in\H\,.
\end{equation*}
\end{enumerate}
Then $L^*|H|^{1/2}$ is an $H$-smooth operator. 
\end{Corollary}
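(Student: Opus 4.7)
The proof is essentially a matter of unwrapping definitions and then appealing to Lemma \ref{silistea}(iii); there is no genuinely hard step, so the plan is short.

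My strategy is to show that either hypothesis (i) or (ii) implies $L \in \mathbb{B}\bigl(\H,\mathscr{A}_{1/2,1}\bigr)$, at which point Lemma \ref{silistea}(iii) gives directly that $T := L^*|H|^{1/2}$ is globally $H$-smooth. Thus the main content is recognizing that the two displayed integrals are exactly (the nontrivial part of) the two admissible norms on $\mathscr{A}_{1/2,1}$ listed just above the corollary.

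Concretely, under hypothesis (i), I combine the assumed estimate
\[
\int_0^1 \p\eta(\lambda A)Lu\p \,\frac{d\lambda}{\lambda^2} \le C\p u\p
\]
with the trivial bound $\p\zeta(A)Lu\p \le \p\zeta\p_\infty \p L\p_{\mathbb{B}(\H)}\p u\p$, which holds because $\zeta(A)$ is a bounded operator on $\H$ (via the functional calculus for the self-adjoint $A$) and $L \in \mathbb{B}(\H)$. Summing these two bounds shows that $\p Lu\p_{\mathscr{A}_{1/2,1}} \le C'\p u\p$ with respect to the equivalent norm \eqref{nuci} of Littlewood–Paley type. Hence $L$ extends to an element of $\mathbb{B}\bigl(\H,\mathscr{A}_{1/2,1}\bigr)$. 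Under hypothesis (ii) the argument is identical, now using the resolvent admissible norm $\p\cdot\p'_{1/2,1}$: the constant term $\p Lu\p$ is controlled by $\p L\p_{\mathbb{B}(\H)}\p u\p$, and the integral term is exactly the hypothesis.

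Once $L \in \mathbb{B}\bigl(\H,\mathscr{A}_{1/2,1}\bigr)$ is established, Lemma \ref{silistea}(iii) applies verbatim and yields the $H$-smoothness of $L^*|H|^{1/2}$. The only subtle point that one should mention, but which requires no work, is the equivalence of the two admissible norms stated in items (i) and (ii) preceding the corollary: this is a standard fact about real interpolation of the domain of the generator of a unitary $C_0$-group, so the two hypotheses really do characterize the same class of operators $L$. I expect no genuine obstacle; the role of the Corollary is purely to repackage Lemma \ref{silistea}(iii) into a criterion that can be checked by either a Littlewood–Paley decomposition along the spectrum of $A$ or a resolvent calculation, whichever is more convenient in applications.
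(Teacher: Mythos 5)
Your proposal is correct and follows essentially the same route as the paper: verify via the Littlewood--Paley or resolvent admissible norm (plus the trivial bound on the zeroth term by $\p L\p_{\mathbb B(\H)}$) that $L\in\mathbb B\big(\H,\mathscr A_{1/2,1}\big)$, then invoke Lemma \ref{silistea}(iii). Nothing further is needed.
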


\begin{proof}
The assertion follows from Lemma \ref{silistea} (iii) and the two characterizations of the space $\mathscr A_{1/2,1}$ described above.
\end{proof}

A simplification (but also a weakening) is possible by abstract properties of interpolation spaces:

\begin{Corollary}\label{lippia}
The operator $\,({\sf Id}+|A|)^{-\th}|H|^{1/2}$ is $H$-smooth for every $\th>1/2$\,.
\end{Corollary}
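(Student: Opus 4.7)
The plan is to apply Lemma \ref{silistea}(iii) with the bounded self-adjoint operator $L:=({\sf Id}+|A|)^{-\th}$. Since $L^*=L$, the assertion that $L^*|H|^{1/2}=({\sf Id}+|A|)^{-\th}|H|^{1/2}$ is $H$-smooth will follow at once from the lemma provided one verifies
\begin{equation*}
({\sf Id}+|A|)^{-\th}\in\mathbb B\big(\H,\mathscr A_{1/2,1}\big)\,.
\end{equation*}

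To prove this inclusion, note that $({\sf Id}+|A|)^{-\th}$ is a bounded bijection of $\H$ onto ${\sf Dom}\big(({\sf Id}+|A|)^\th\big)$ equipped with its graph norm. Since $A$ is self-adjoint on $\H$, the standard identification of the domain of a fractional power of a non-negative self-adjoint operator with a real interpolation space (see, \eg \cite{BL}) yields, with equivalent norms,
\begin{equation*}
{\sf Dom}\big(({\sf Id}+|A|)^\th\big)=(\H,\mathscr A)_{\th,2}\,.
\end{equation*}
Monotonicity of real interpolation in the first index then gives the continuous embedding $(\H,\mathscr A)_{\th,2}\hookrightarrow(\H,\mathscr A)_{1/2,1}=\mathscr A_{1/2,1}$ for every $\th>1/2$. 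Chaining the two embeddings shows $({\sf Id}+|A|)^{-\th}\in\mathbb B\big(\H,\mathscr A_{1/2,1}\big)$ and the conclusion follows from Lemma \ref{silistea}(iii).

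A self-contained variant instead invokes Corollary \ref{lipia}(ii) directly: by the spectral theorem for $A$,
\begin{equation*}
\big\Vert A({\sf Id}-i\tau A)^{-1}({\sf Id}+|A|)^{-\th}u\big\Vert\le\Big(\sup_{\lambda\in\R}\frac{|\lambda|}{\sqrt{1+\tau^2\lambda^2}\,(1+|\lambda|)^{\th}}\Big)\p\!u\!\p\le C\tau^{\th-1}\p\!u\!\p,
\end{equation*}
the last bound following because the supremum is attained near $|\lambda|\sim 1/\tau$. Then $\int_0^1\tau^{1/2}\cdot C\tau^{\th-1}\,\frac{d\tau}{\tau}=C\int_0^1\tau^{\th-3/2}d\tau$ is finite precisely when $\th>1/2$, which also explains the strict inequality in the statement. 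The main obstacle in either presentation is not the calculation itself but the identification of the domain of a fractional power with the interpolation space $(\H,\mathscr A)_{\th,2}$ (respectively, the sharpness of the one-dimensional integral in the alternative route); once one of these is granted, the rest is a direct assembly of tools already developed in the preceding lemma and corollary.
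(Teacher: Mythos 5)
Your main argument is essentially the paper's own proof: you identify $\,{\sf Dom}\big(({\sf Id}+|A|)^{\th}\big)={\sf Dom}\big(|A|^{\th}\big)$ with the real interpolation space $\mathscr A_{\th,2}$, use the monotone embedding $\mathscr A_{\th,2}\hookrightarrow\mathscr A_{1/2,1}$ for $\th>1/2$, and conclude with Lemma \ref{silistea}\,(iii), exactly as in the text. Your alternative route through Corollary \ref{lipia}\,(ii) is a valid self-contained variant, with the minor caveat that the bound $C\tau^{\th-1}$ for the supremum is only accurate for $\th\le 1$ (for $\th>1$ the supremum is of the order of a constant, attained near $|\lambda|\sim 1$, which still makes $\int_0^1\tau^{1/2}\,\frac{d\tau}{\tau}$-type integral finite, so the conclusion is unaffected).
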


\begin{proof}
We recall the continuous embeddings of real interpolation spaces
\begin{equation*}\label{pucheni}
\mathscr A_{s,p}\hookrightarrow\mathscr A_{t,q}\quad{\rm if}\ s>t\,,\ p,q\in[1,\infty]\,.
\end{equation*}
In particular, we need the case $\,\mathscr A_{\th,2}\hookrightarrow\mathscr A_{1/2,1}$ where $\th>1/2$\,. Since $\mathscr A$ is the domain of the self-adjoint operator $A$ in the Hilbert space $\H$\,, then $\,\mathscr A_{\th,2}$ coincides with $\mathscr A_{\th}:={\sf Dom}\big(|A|^\th\big)$ (which can also be defined by complex interpolation) and the norm in $\,\mathscr A_{\th,2}$ is equivalent with the graph norm of $|A|^\th$. Thus
\begin{equation*}\label{scrovistea}
({\sf Id}+|A|)^{-\th}\in\mathbb B\big(\H,\mathscr A_{\th}\big)\subset\mathbb B\big(\H,\mathscr A_{1/2,1}\big)
\end{equation*}
and then we apply again Lemma \ref{silistea} (iii).
\end{proof}

\section{Operators on graded Lie groups}\label{fralticeni}

The results of the previous sections can be applied to operators on {\it homogeneous Lie groups}, since (by definition) these are endowed with a family of dilations leading to an interesting unitary group, crucial in the development of their theory. We recall that homogeneous Lie groups are connected, simply connected and nilpotent, and they almost coincide with this class of groups. Since there seems to be little more to say than just copying the results above in this context, we prefer to turn to the smaller class of {\it graded groups}, which allow more concrete statements. In this section we are going to review briefly some basic facts. Much more information can be found in \cite[Ch.\;4]{FR}.

\smallskip
Let $\G$ be a graded Lie group of step $r$, with unit $\e$ and Haar measure $dx$\,. Its Lie algebra can be written as a direct sum of vector subspaces
\begin{equation*}\label{lie}
\g=\mathfrak v_1\oplus\dots\oplus \mathfrak v_r\,,
\end{equation*}
where $\,[\mathfrak v_k,\mathfrak v_l]\subset\mathfrak v_{k+l}$ for every $k,l\in\{1,\dots,r\}$\,; if $\,k+l> r$ we just set $\mathfrak v_{k+l}=\{0\}$\,. Then $\G$ is a connected and simply connected nilpotent Lie group and thus the exponential map ${\sf exp}:\mathfrak g\to\G$ is a diffeomorphism with inverse denoted by ${\sf log}$\,. Let us set
\begin{equation*}\label{dim}
N_k:=\dim\mathfrak v_k\,,\quad N:=\dim\g=N_1+N_2+\dots+N_r\,,
\end{equation*}
and define {\it the homogeneous dimension} 
\begin{equation*}\label{homdim}
M:=N_1+2N_2+\dots+rN_r\,.
\end{equation*}

We are going to use basis $\{X_1,\dots,X_N\}$ of $\g$ such that for every $k$ the $N_k$ vectors $\big\{X_{j}\mid N_1+\dots+N_{k-1}<j\le N_1+\dots+N_{k-1}+N_k\big\}$ generate $\mathfrak v_k$  (we set $N_0=0$ for convenience)\,. For any $x\in\G$ one decomposes ${\sf log}\,x=\sum_{j=1}^N{\sf x}_jX_j$\,, which defines coordinates on the group $\G\ni x\to({\sf x}_1,\dots,{\sf x}_N)\in\R^N$. 

\smallskip
Usually one introduces the dilations on the graded group $\G$ in terms of the multiplicative group $(\R_+,\cdot)$\,. Making use of the group isomorphism $(\R,+)\ni t\to e^t\in(\R_+,\cdot)$\,, we switch to an action of the additive group $\R$ by automorphisms of the Lie algebra $\g=\mathfrak v_1\oplus\dots\oplus \mathfrak v_r$ given by
\begin{equation*}\label{dila}
\mathfrak{dil}_t\big(Y_{(1)},Y_{(2)},\dots,Y_{(r)}\big):=\big(e^t Y_{(1)},e^{2t}Y_{(2)}\dots,e^{rt}Y_{(r)}\big)\,,\quad t\in\R\,,\ Y_{(k)}\in\mathfrak v_k\,,\ 1\le k\le r\,.
\end{equation*}
One has 
\begin{equation}\label{stoarpha}
\mathfrak{dil}_t(X_j)=e^{\nu_j t} X_j\,,\quad t\in\R\,,\ 1\le j\le N,
\end{equation}
in terms of the {\it dilation weights} $\,\nu_j:=k$ if $\,N_1+\dots+N_{k-1}< j\le N_1+\dots+N_k$\,. Then we transfer the dilations to the group by 
\begin{equation}\label{ansfer}
{\sf dil}_t(x):={\sf exp}\big[\mathfrak{dil}_t({\sf log}\,x)\big]\,,\quad x\in\G\,,\ t\in\R\,.
\end{equation}
This induces a unitary strongly continuous $1$-parameter group on $\H:=L^2(\G)$ by
\begin{equation}\label{sacele}
\big[{\sf Dil}(t)u\big](x):=e^{\frac{Mt}{2}}\big(u\circ{\sf dil}_t\big)(x)=e^{\frac{Mt}{2}}u\big({\sf dil}_t(x)\big)\,.
\end{equation}
This will be the group $W$ involved in the abstract constructions and results of sections \ref{firtanunusus} and \ref{firtanunus}. In terms of the coordinate functions defined before, its infinitesimal generator, uniquely defined by ${\sf Dil}(t)=e^{itA}$, is given by
\begin{equation*}\label{generatoru}
A=\frac{1}{i}\sum_{j=1}^N \nu_j{\sf x}_j X_j+\frac{M}{2i}{\rm Id}=\frac{1}{i}\sum_{j=1}^N \nu_j X_j{\sf x}_j-\frac{M}{2i}{\rm Id}\,.
\end{equation*}
The operator $A$ depends on the homogeneous structure of $\G$\,, but not on the chosen basis.

\smallskip
We discuss now {\it homogeneous Rockland operators}, both because they provide operators $H$ to be studied and because they are useful to express $H$-smoothness. By definition, a Rockland operator is a (say left) invariant differential operator $R$ on $\G$ such that, for every non-trivial irreducible representation $\pi:\G\to\mathbb B(\H_\pi)$\,, the operator $d\pi(R)$  is injective on the subspace $\H_\pi^\infty$ of all smooth vectors of the representation. It exists when the group $\G$ is graded, with positive integer weights $\nu_j$\,. It is shown that such operators also exist subject to the requirement to be homogeneous and positive: the homogeneity reads, using \eqref{sacele}
\begin{equation*}\label{brasov}
{\sf Dil}(-t)R\,{\sf Dil}(t)=e^{q t}R\,,\quad\forall\,t\in\R\,.
\end{equation*}
The degree of homogeneity $q$ is a multiple of any of the dilation weights. An important fact (that we will not use) is that a left invariant homogeneous differential operator is hypoelliptic if and only if it is a Rockland operator \cite{HN,FR}.

\smallskip
We also assume that $R$ is positive; by \cite[Cor.\! 4.3.4]{FR} it is also injective, so $0$ is not an eigenvalue. As a consequence, it satisfies the strong positivity property
\begin{equation*}\label{ploiesti}
\<Ru,u\>>0\,,\quad\forall\,u\in{\sf Dom}(R)\setminus\{0\}\,.
\end{equation*}

\begin{Example}\label{timisjos}
{\rm If $l\in\N_0$\,, the power $R^l$ of a $q$-homogeneous positive Rockland operator is a $ql$-homogeneous positive Rockland operator. The transpose $H^\dag$ has the same properties as $R$\,. Concrete positive examples of homogeneous degree $q=2p$ are 
\begin{equation*}\label{ghimbav}
R:=\sum_{j=1}^{N'}(-1)^{\frac{p}{\nu_j}}Z_j^{2\frac{p}{\nu_j}},
\end{equation*}
where $\{Z_j\}_{j=1,\dots,N}$ is a basis as in \cite[Lemma 3.1.14]{FR} and $p$ is a commun multiple of the dilation weights. The basis is such that $Z_j$ is $\nu_j$-homogeneous, $Z_1,\dots,Z_{N'}$ generates $\g$ as a Lie algebra, while $Z_{N'+1},\dots,Z_N$ generates a vector space containing $[\g,\g]$\,.
}
\end{Example}

We are not going to review the homogeneous $\big\{\dot L^2_{\si}(\G)\!\mid\!\si\in\R\}$ and inhomogeneous $\big\{L^2_{\si}(\G)\!\mid\!\si\in\R\}$ Sobolev spaces on the graded group $\G$\,, since a clear and comprehensive presentation can be found in \cite[Sect.\;4.4]{FR}. They can also be deduced from the way we introduced the spaces $\mathcal G^\si$ and $\dot{\mathcal G}^\si$ at the beginning of Section \ref{firtanunusus}: one just has to replace $|H|$ with a suitable power of a positive homogeneous Rockland operator. We recall that we allow different equivalent scalar products on these Hilbertizable spaces.

\smallskip
The results of this article apply to {\it self-adjoint injective homogeneous operators on graded Lie groups}. It is not our intention to explore these conditions in a very general setting, this being an arduous task. We only give some simple examples and indicate informly others that could be in principle fully treated with extra work and/or imposing further requirements. We assume that the order $\beta$ of homogeneity is strictly positive, as we did before, but this in not important.
\begin{enumerate}
\item[(i)]
First of all, $H$ could be a multiplication operator $H=m(x)$ with some continuous (or measurable) function $m:\G\to\mathbb R$\,. Such an operator is self-adjoint on ${\sf Dom}(H):=\{u\in L^2(\G)\mid mu\in L^2(\G)\}$\,. The operator is homogeneous of order $\beta$ if and only if the function is homogeneous of order $-\beta$\,, i.e.\! if $m\big({\sf dil}_{-t}(x)\big)=e^{\beta t}m(x)$ for all $x,t$\,. It is injective if and only if $\{x\in\G\mid m(x)=0\}$ is negligible with respect to the Haar measure (and this is governed by the behavior of $m$ on "the unit sphere"). Clearly, $|H|^{1/2}$ is the operator of multiplication with the function $|m|^{1/2}$\,. Particular cases are $m(x):=p([x])$\,, where $[\cdot]$ is an homogeneous quasi-norm, $p:\mathbb R_+\to\R$ is homogeneous in the usual sense and $p(\lambda)\ne 0$ for some (and thus for all) $\lambda\ne 0$\,.
\item[(ii)]
Rockland operators yield particular cases of operators $H$ to which we can apply the already obtained results: If $R$ is a positive Rockland operator of homogeneous degree $q$ and $\mu>0$\,, then the fractional power $H:=R^\mu$ is self-adjoint, injective and homogeneous of order $\beta:=\mu q$\,. In terms of Soblolev spaces one has the identifications
\begin{equation*}\label{babadag}
\mathcal G^{\pm 2}=L^2_{\pm\beta}(\G)\,,\quad\mathcal G^{\pm 1}=L^2_{\pm\beta/2}(\G)\,,\quad\dot{\mathcal G}=\dot L^2_{\beta/2}(\G)\,,\quad\dot{\mathcal G}^*=\dot L^2_{-\beta/2}(\G)\,.
\end{equation*}
More generally, even if $R$ is not positive, $H\!:=p(R)$ is admissible if $p:\mathbb R_+\!\to\R$ is homogeneous and $p(\lambda)\ne 0$ if $\lambda\ne 0$\,.
In particular, by Remark \ref{filiasi}, we know that $p(R)$ has purely absolutely continuous spectrum.
\item[(iii)] 
In terms of our basis $\{X_1,\dots,X_N\}$ and a multi-index $\delta:=(\delta_1,\dots,\delta_N)$\,, the operator $X^\delta\!:=X_1^{\delta_1}\dots X_N^{\delta_N}$ is homogeneous of degree $\nu_1\delta_1+\dots+\nu_N\delta_N$\,. One can combine such differential operators with homogeneous multiplication operators from (i) and construct formally self-adjoint homogeneous operators. But rigorous self-adjointness and injectivity can be very hard in general.

\item[(iv)]
A large and interesting class of operators associated to a graded group have a pseudo-differential nature and is described in \cite[Sect.\;5]{FR}; see also references therein. The construction involves operator-valued symbols and  the irreducible representation theory of the group and is too complicated to be mentioned here, needing the introduction of many notions of harmonic analysis.
But in \cite[Sect.\;8]{MR} (see also \cite{Me,Gl1,Gl2} for the invariant case) another (but equivalent) type of pseudo-differential operators are built from scalar-valued symbols $a$ defined on the cotangent bundle $\,T^*Ò\G\cong\G\times\mathfrak g^*$, where $\g^*$ is the dual of the Lie algebra $\g$ of $\G$\,. The quantization rule makes use of the diffeomorphism $\log:\G\to\g$ and the duality 
$$
\g\times\g^*\ni(X,\mathcal X)\to\<X\mid\mathcal X\>:=\mathcal X(X)
$$ 
and is given by
$$
(Hu)(x)\equiv[\mathfrak{Op}(a)u](x):=\int_{\G}\int_{\g^*}e^{i\<\log(y^{-1}x)\,\mid\,\mathcal X\>}a(x,\mathcal X)u(y)dy d\mathcal X
$$
in terms of the Lebesgue measure $d\mathcal X$, eventually normalized accordingly to $\{\mathcal X_1,\dots,\mathcal X_N\}$, the dual basis of $\{X_1,\dots,X_N\}$\,. One computes formally
\begin{equation*}\label{voinesti}
[{\sf Dil}(-t)\mathfrak{Op}(a){\sf Dil}(t)u](x)=e^{-\frac{Mt}{2}}\!\int_{\G}\int_{\g^*}\!e^{i\<\log[{\sf dil}_{-t}(y^{-1}x)]\,\mid\,\mathcal X\>}a\big({\sf dil}_{-t}(x),\mathcal Y\big)u(y)dy d\mathcal Y
\end{equation*}
which, using the formula \eqref{ansfer} and the natural dual action by dilations on $\g^*$ can be written
\begin{equation*}\label{voinicesti}
[{\sf Dil}(-t)\mathfrak{Op}(a){\sf Dil}(t)u](x)=e^{-\frac{Mt}{2}}\!\int_{\G}\int_{\g^*}\!e^{i\<\log(y^{-1}x)\,\mid\,\mathfrak{dil}^*_{-t}(\mathcal Y)\>}a\big({\sf dil}_{-t}(x),\mathcal Y\big)u(y)dy d\mathcal Y
\end{equation*}
If $\{\mathcal X_1,\dots,\mathcal X_N\}$ is the dual basis of $\{X_1,\dots,X_N\}$\,, from \eqref{stoarpha} one gets immediatly for every $j$ that $\,\mathfrak{dil}^*_{-t}(\mathcal X_j)=e^{-\nu_j}\mathcal X_j$\,, and after a change of variables this leads to
\begin{equation*}\label{voynicesti}
{\sf Dil}(-t)\mathfrak{Op}(a){\sf Dil}(t)=\mathfrak{Op}(a_t)\,,\quad a_t(x,\mathcal X):=a\big({\sf dil}_{-t}(x),\mathfrak{dil}^*_{t}(\mathcal X)\big)\,,
\end{equation*}
so the homogeneity of the pseudo-differential operator can be extracted from that of the symbol. Note that, at least formally, the left-invariant operators are obtained letting $a$ be independent on $x$ and they are right convolution operators by a suitable inverse Fourier transform of $a$\,. Once again rigorous self-adjointness and injectivity are difficult to decide at such a general level.
\item[(v)]
In the commutative case $\G=\R^N\equiv\g$\,, the operator $A$ is just the generator of the usual dilations acting at the level of the group as $\,(x_1,\dots,x_N)\to(e^t x_1,\dots,e^t x_N)$\,. One has $M=N$ and $\nu_N=1$\,. In this case there is a rather general class of operators that can be treated with precision. Formally it could be obtained from (iv) making suitable identifications and choosing $a$ independent of the first variable. Independently and rigorously, one picks a measurable function $b:\R^n\to\R$ and define the self-adjoint operator $H:=b(D)$ by the functional calculus associated to the family $D:=(D_1=-i\partial_1,\dots,D_N=-i\partial_N)$ of $N$ commuting self-adjoint operators. It can also be seen as a convolution operator. Of course, if $b$ is a polynomial, the outcome is a differential operator with constant coefficients. The homogeneity of $b$ is responsable for the homogeneity of the corresponding operator. The injectivity is equivalent to $b^{-1}(\{0\})$ being Lebesgue-negligible, which, under the homogeneity assumption, is equivalent to $b^{-1}(\{0\})\cap S_{\R^N}$ being negligible in the unit sphere. This is much weaker than the usual ellipticity condition $b(\xi)\ne 0$ if $\xi\ne 0$\,, implying in particular that $b$ has a definite sign if $N\ge 2$\,. A symbol $b$ which is admissible for us could not be smooth. If it is, it could be dispersive ($(\nabla b)(\xi)\ne 0$ if $\xi\ne 0$) or not. 
\end{enumerate}

\section{Global smoothness for operators on graded Lie groups}\label{falticeni}

As said above, Rockland operators yield particular cases of operators $H$ to which we can apply the already obtained results. 
Besides this, another reason to mention Rockland operators is connected to the necessity to simplify results as Corollary \ref{lippia} and bring them to a form easier to compare with the results known for $\G=\R^N$. 
It is useful to fix a "first order" {\it defining operator $\mathbf D$ for Sobolev spaces}. For this we select (arbitrarily) a positive Rockland operator $R_0$ of homogeneous order $q$ and set $\mathbf D:=R_0^{1/q}$. For $\G=\R^N$ for example, one could think of the (positive) Laplacian $R_0=\Delta:=-\partial_1^2-\dots-\partial_N^2$ and then $\mathbf D=\Delta^{1/2}=|D|$\,, where $D:=-i\nabla$. The important property is that, for every $\si$\,, the following expressions define admissible norms on the Sobolev spaces (inhomogeneous and homogeneous, respectively)
\begin{equation*}\label{lehliu}
\p\!u\!\p_{L^2_\si(\G)}^2\,\sim\,\p\!u\!\p^2+\p\!\mathbf D^\si\!u\!\p^2\,,\quad \p\!u\!\p_{\dot L^2_\si(\G)}\,\sim\,\p\!\mathbf D^\si\!u\!\p\,.
\end{equation*}

The operator of multiplication by a Borel function $f:\G\to\mathbb C$ in $L^2(\G)$ will be denoted by $f(x)$\,. 
We are looking for $H$-smooth operators of the form $\,f(x)g(\mathbf D)|H|^{1/2}$, having Lemma \ref{silistea} in mind. Since $\mathbf D$ is positive and $0$ is not an eigenvalue, it will be enough to consider functions $g$ only defined on $(0,\infty)$\,. In the simplest case $f$ and $g$ are bounded function; but in the next section we will need the case in which $f(x)g(\mathbf D)$\,, a priori defined on a dense subset, extends to an element of $\mathbb B(\H)$ without each factor being bounded.

\smallskip
The next Lemma will be an important tool below. Recall that $\nu_N$ is the largest dilation weight.

\begin{Lemma}\label{mosneni}
Let $\,\Phi:(0,\infty)\to\mathbb C\setminus\{0\}$ be a $C^1$-function and $\,\Psi:\G\setminus\{\e\}\to\mathbb C$ a continuous function, such that the following functions are bounded for every $j=1,\dots,N$:
\begin{equation}\label{pascany}
\lambda\Phi'(\lambda)\Phi(\lambda)^{-1},\quad\lambda^{\nu_j}\Phi(\lambda)\,,\quad{\sf x}_j\Psi(x)\,.
\end{equation}
Assume in addition that $\Phi(\mathbf D)\Psi(x)\in\mathbb B(\H)$\,. Then the operator $A\Phi(\mathbf D)\Psi(x)$ is bounded .
\end{Lemma}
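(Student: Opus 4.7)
My plan is to deduce boundedness of $A\Phi(\mathbf D)\Psi(x)$ by studying its formal adjoint $\overline\Psi(x)\overline\Phi(\mathbf D)A$, initially defined on ${\sf Dom}(A)$, and showing that it extends to an element of $\mathbb B(\H)$. The hypothesis $\Phi(\mathbf D)\Psi(x)\in\mathbb B(\H)$ already yields that $\overline\Psi(x)\overline\Phi(\mathbf D)=[\Phi(\mathbf D)\Psi(x)]^*$ is bounded on $\H$, so the entire difficulty is to understand how $A$ sits on the right of this bounded product.

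The first move is to exploit that $\mathbf D=R_0^{1/q}$ is homogeneous of order one with respect to the dilation group: $W(-t)\overline\Phi(\mathbf D)W(t)=\overline\Phi(e^t\mathbf D)$, and differentiating at $t=0$ gives the commutator identity $[A,\overline\Phi(\mathbf D)]=i\mathbf D\overline{\Phi'}(\mathbf D)$. I then rewrite
$$
\overline\Psi(x)\overline\Phi(\mathbf D)A = \overline\Psi(x)A\overline\Phi(\mathbf D)-i\,\overline\Psi(x)\mathbf D\overline{\Phi'}(\mathbf D).
$$
The second term factors as $[\overline\Psi(x)\overline\Phi(\mathbf D)]\cdot\bigl[\lambda\overline{\Phi'}(\lambda)/\overline\Phi(\lambda)\bigr](\mathbf D)$: the first factor is the bounded adjoint of $\Phi(\mathbf D)\Psi(x)$, while the second is a bounded function of $\mathbf D$ by the first assumption of \eqref{pascany}.

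For the main term I substitute $A=\frac{1}{i}\sum_{j=1}^N\nu_j{\sf x}_jX_j+\frac{M}{2i}{\sf Id}$. The identity piece contributes $\frac{M}{2i}\overline\Psi(x)\overline\Phi(\mathbf D)\in\mathbb B(\H)$, and each remaining summand takes the form
$$
\overline\Psi(x){\sf x}_jX_j\overline\Phi(\mathbf D) = ({\sf x}_j\overline\Psi)(x)\,\bigl[X_j\overline\Phi(\mathbf D)\bigr],
$$
where multiplication by the bounded function ${\sf x}_j\overline\Psi$ (third assumption of \eqref{pascany}) already lies in $\mathbb B(\H)$. To handle $X_j\overline\Phi(\mathbf D)$ I factor it through the Sobolev operator as
$$
X_j\overline\Phi(\mathbf D) = \bigl[X_j\mathbf D^{-\nu_j}\bigr]\cdot\bigl[\mathbf D^{\nu_j}\overline\Phi(\mathbf D)\bigr].
$$
The functional-calculus piece $\mathbf D^{\nu_j}\overline\Phi(\mathbf D)$ is bounded because $\lambda^{\nu_j}\Phi(\lambda)$ is bounded (second assumption of \eqref{pascany}), and the Riesz-type operator $X_j\mathbf D^{-\nu_j}$ is bounded on $\H$ by the Sobolev theory of Rockland operators \cite[Ch.\;4]{FR}: the left-invariant, $\nu_j$-homogeneous vector field $X_j$ is continuous from $\dot L^2_{\nu_j}(\G)$ to $L^2(\G)$, while $\mathbf D^{-\nu_j}$ is a linear homeomorphism $L^2(\G)\to\dot L^2_{\nu_j}(\G)$.

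Assembling the pieces, $\overline\Psi(x)\overline\Phi(\mathbf D)A$ is a finite sum of bounded operators on the dense subset ${\sf Dom}(A)$, so it extends to an element of $\mathbb B(\H)$, and by duality $A\Phi(\mathbf D)\Psi(x)\in\mathbb B(\H)$ as well. The main technical point will be verifying that every functional-calculus manipulation above is legitimate on a common smooth dense core for both $\mathbf D$ and $A$ (e.g. a Schwartz-type subspace), in view of the possible singularity of $\Phi$ at the origin of the spectrum of $\mathbf D$; once this bookkeeping is in place, the heart of the argument really is the $L^2$-boundedness of the Riesz transform $X_j\mathbf D^{-\nu_j}$ supplied by the graded-group Sobolev calculus.
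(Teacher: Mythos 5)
Your proposal is correct and is essentially the paper's own proof run on the adjoint side: it uses the same commutator identity $[A,\Phi(\mathbf D)]=i\mathbf D\Phi'(\mathbf D)$, the same coordinate expression for $A$, the same Sobolev/Rockland boundedness of the Riesz-type factor (you use $X_j\mathbf D^{-\nu_j}$, the paper the adjoint statement $\mathbf D^{-\nu_j}X_j$ from \cite[Th.\,4.4.16]{FR}), and the three bounds in \eqref{pascany} absorb exactly the same factors. Working with $\overline\Psi(x)\overline\Phi(\mathbf D)A$ and returning via self-adjointness of $A$ is a harmless variation on the paper's direct verification that $\Phi(\mathbf D)A\Psi(x)+i\mathbf D\Phi'(\mathbf D)\Psi(x)\in\mathbb B(\H)$, reduced there to $\Phi(\mathbf D)X_j{\sf x}_j\Psi(x)\in\mathbb B(\H)$.
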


\begin{proof}
From the homogeneity of $\mathbf D$ with respect to $A$\,, written $e^{-itA}\mathbf D e^{itA}=e^t\mathbf D$\,, one deduces 
$$
e^{-itA}\Phi(\mathbf D)e^{itA}=\Phi(e^t\mathbf D)\,,\quad t\in\R\,.
$$ 
Taking the derivative in $t=0$ one gets 
\begin{equation*}\label{valcea}
[A,\Phi(\mathbf D)]=i\mathbf D\Phi'(\mathbf D)\,,
\end{equation*}
so we have to check that
\begin{equation}\label{busteni}
\Phi(\mathbf D)A\,\Psi(x)+i\mathbf D\Phi'(\mathbf D)\Psi(x)\in\mathbb B(\H)\,.
\end{equation}
The second term is easily seen to be bounded, writting it as 
$$
i\mathbf D\Phi'(\mathbf D)\Phi(\mathbf D)^{-1}\Phi(\mathbf D)\Psi(x)\,.
$$ 
Taking into account the explicit form of the operator $A$ and the fact that $\Phi(\mathbf D)\Psi(x)$ is bounded\,, the first one only requires
\begin{equation*}\label{poianatapului}
\Phi(\mathbf D)X_j{\sf x}_j\Psi(x)\!\in\mathbb B(\H)\,,\quad\forall\,j=1,\dots,N.
\end{equation*}
The operator $X_j$ is homogeneous of degree $\nu_j$\,. By \cite[Th.\,4.4.16]{FR} it maps continuously $\dot L^2_{s+\nu_j}(\G)$ to $\dot L^2_s(\G)$ for any $s\in\R$\,. Setting $s=-\nu_j$ and since $\mathbf D$ is defining for the homogeneous Sobolev spaces, it follows immediately that $\,\mathbf D^{-\nu_j}X_j$ is bounded. So, using \eqref{pascany} once again, we have
\begin{equation*}\label{radauti}
\Phi(\mathbf D)X_j{\sf x}_j\Psi(x)=\Phi(\mathbf D)\mathbf D^{\nu_j}\mathbf D^{-\nu_j}X_j{\sf x}_j\Psi(x)\in\mathbb B(\H)\,,
\end{equation*}
finishing the proof. 
\end{proof}

\begin{Remark}\label{calarasi}
{\rm It is easy to see from the proof that the norm $\p\!A\Phi(\mathbf D)\Psi(x)\!\p$ only depends (linearly or quadratically) on 
\begin{equation}\label{calafat}
\sup_{\lambda>0}|\lambda\Phi'(\lambda)\Phi(\lambda)^{-1}|\,,\ \ \sup_{\lambda>0}\lambda^{\nu_j}|\Phi(\lambda)|\,,\ \ \sup_{x\in\G}|{\sf x}_j\Psi(x)|\,,\quad\p\!\Phi(\mathbf D)\Psi(x)\!\p. 
\end{equation}
}
\end{Remark}

\begin{Proposition}\label{ziceni}
Let $H$ be self-adjoint, injective and homogeneous. Let $\,\varphi:(0,\infty)\to(0,\infty)$ and $\,\psi:\G\to(0,\infty)$ be functions such that  for $j=1,\dots,N$
\begin{equation}\label{pocoscany}
\sup_{\lambda>0}\big[\lambda|\varphi'(\lambda)|\varphi(\lambda)^{-1}\big]<\infty\,,\quad\sup_{\lambda>0}\big[\lambda^{\nu_j}\varphi(\lambda)\big]<\infty\,,\quad\sup_{x\in\G}|{\sf x}_j\psi(x)|<\infty\,.
\end{equation}
Also assume that $\varphi(\mathbf D)^\si\psi(x)^\si$, defined as a sesquilinear form on $C^\infty_0(\G)$\,, extends to an element of $\,\mathbb B(\H)$ for any $\si\in[0,1]$ and that for any $u,v\in C^\infty_0(\G)$ the family $\big\{\big\<\psi(x)^z u,\varphi(\mathbf D)^{\overline z}v\big\>\mid \re z\in(0,1)\big\}$ is holomorphic. Then for every $\th\in(1/2,1]$ the operator 
$$
T_\th(\varphi,\psi;H):=\psi(x)^{\th}\varphi(\mathbf D)^{\th}|H|^{1/2}
$$ 
is globally $H$-smooth.
\end{Proposition}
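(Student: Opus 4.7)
The plan is to apply Lemma \ref{silistea}(iii) with $L := \varphi(\mathbf D)^\th \psi(x)^\th$; since $\varphi$ and $\psi$ are positive, this is the adjoint of $\psi(x)^\th \varphi(\mathbf D)^\th$, so that $T_\th = L^*|H|^{1/2}$. It thus suffices to show $L \in \mathbb B(\H, \mathscr A_{1/2,1})$. Imitating the proof of Corollary \ref{lippia}, for $\th>1/2$ one has $\mathscr A_{\th,2} = {\sf Dom}(|A|^\th) \hookrightarrow \mathscr A_{1/2,1}$, and $\mathscr A_{\th,2}$ coincides with the complex interpolation space $[\H,\mathscr A]_\th$. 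The task therefore reduces to showing that the analytic family
\[
F(z) := \varphi(\mathbf D)^z \psi(x)^z,\qquad 0\le\re z\le 1,
\]
sends $\H$ boundedly into $[\H,\mathscr A]_\th$ at $z=\th$. The required weak operator-analyticity inside the strip is exactly the assumption that $\<\psi(x)^z u,\varphi(\mathbf D)^{\bar z}v\>=\<F(z)u,v\>$ be holomorphic for $u,v\in C^\infty_0(\G)$.

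The core of the plan consists in two boundary estimates for $F$, which would feed into a Stein-type interpolation theorem (or into the version of \cite{CJ} invoked just before the statement). At $\re z = 0$ both $\varphi(\mathbf D)^{iy}$ and $\psi(x)^{iy}$ are unitary because $\varphi,\psi>0$, so $\p F(iy)\p_{\mathbb B(\H)}\le 1$. At $\re z = 1$, I would factor
\[
F(1+iy) = \varphi(\mathbf D)^{iy}\,[\varphi(\mathbf D)\psi(x)]\,\psi(x)^{iy},
\]
so that the $\si=1$ case of the main hypothesis yields boundedness on $\H$, uniformly in $y$. For the graph norm in $\mathscr A$ one needs in addition that $AF(1+iy)$ be bounded with at most polynomial growth in $|y|$. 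Differentiating the homogeneity relation $e^{-itA}\mathbf D e^{itA}=e^t\mathbf D$ at $t=0$ produces the commutator $[A,\varphi(\mathbf D)^{iy}]=-y\,\mathbf D\varphi'(\mathbf D)\varphi(\mathbf D)^{iy-1}$; substituting it decomposes $AF(1+iy)$ into a main piece $\varphi(\mathbf D)^{iy}\,[A\varphi(\mathbf D)\psi(x)]\,\psi(x)^{iy}$, bounded by Lemma \ref{mosneni} (whose hypotheses \eqref{pascany} are precisely the assumed \eqref{pocoscany}), plus a correction of size $O(|y|)$ controlled by the constants $\sup_\lambda\lambda|\varphi'(\lambda)|\varphi(\lambda)^{-1}$ and $\p\varphi(\mathbf D)\psi(x)\p_{\mathbb B(\H)}$.

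Interpolation at $z=\th$ then produces $F(\th)\in\mathbb B(\H,\mathscr A_\th)\hookrightarrow\mathbb B(\H,\mathscr A_{1/2,1})$, and Lemma \ref{silistea}(iii) concludes. The delicate step I expect is the rigorous invocation of the operator-valued interpolation of \cite{CJ}: one must certify that the weak analyticity on the dense subspace $C^\infty_0(\G)$ upgrades to an admissible analytic family in their sense, and that the polynomial boundary growth at $\re z=1$ respects the associated Phragm\'en--Lindel\"of-type admissibility condition. Inserting a convergence factor $e^{\delta(z^2-\th^2)}$ and letting $\delta\to 0$ at the end is the standard safeguard. The remaining domain bookkeeping when combining $|H|^{1/2}$, $\varphi(\mathbf D)^z$, $\psi(x)^z$ and $A$ consistently is secondary, being handled by density of $C^\infty_0(\G)$ in the relevant Sobolev-type spaces.
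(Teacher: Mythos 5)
Your proposal is correct and follows essentially the same route as the paper: reduction via Lemma \ref{silistea}(iii), complex interpolation of the analytic family $\varphi(\mathbf D)^z\psi(x)^z$ between the couples $(\H,\H)$ and $(\H,\mathscr A)$ using \cite{CJ}, with the unitary boundary at $\re z=0$ and the $\si=1$ hypothesis plus Lemma \ref{mosneni} giving polynomially growing bounds at $\re z=1$. The only cosmetic difference is that you control $AL_{1+iy}$ by commuting $A$ past the unitary $\varphi(\mathbf D)^{iy}$ explicitly, whereas the paper applies Lemma \ref{mosneni} and Remark \ref{calarasi} directly to $\Phi=\varphi^{1+i\tau}$, $\Psi=\psi^{1+i\tau}$, yielding the same $O(|\tau|)$ growth.
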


\begin{proof}
The adjoint of $\,T_\th\equiv T_\th(\varphi,\psi;H)$ is $\,T_\th^*=|H|^{1/2}L_\th$\,, where $L_\th:=\varphi(\mathbf D)^\th\psi(x)^{\th}$. By Lemma \ref{silistea} (iii), it is enough to show that $\,L_\th\in\mathbb B\big(\H,\mathscr A_{\th}\big)\subset\mathbb B\big(\H,\mathscr A_{1/2,1}\big)$\,. This will be done by complex interpolation of holomorphic families of operators, using the pairs $(\H,\H)$ and $(\H,\mathscr A)$ of Hilbert spaces.

\smallskip
Let us set $\,\Omega:=\{z=\si+i\tau\in\mathbb C\mid \tau\in[0,1]\}$ and define $L:\Omega\to\mathbb B(\H,\H)$ by
\begin{equation}\label{saftica}
L_z:=\varphi(\mathbf D)^{z}\psi(x)^{z}=\varphi(\mathbf D)^{i\tau}\varphi(\mathbf D)^{\si}\psi(x)^{\si}\psi(x)^{i\tau}.
\end{equation}
Recall that $\mathscr A_\th=[\H,\mathscr A]_\th$ is a complex interpolation space. By a deep result \cite[Sect.\;3]{CJ}, we get (an extension) $L_\th\in\mathbb B\big(\H,\mathscr A_{\th}\big)$ if we show that 
\begin{equation}\label{chitila}
\p\!L_{i\tau}\!\p_{\mathbb B(\H,\H)}\,,\quad\p\!L_{1+i\tau}\!\p_{\mathbb B(\H,\mathscr A)}\,\sim\,\p\!L_{1+i\tau}\!\p_{\mathbb B(\H)}+\p\!AL_{1+i\tau}\!\p_{\mathbb B(\H)}
\end{equation}
are well-defined and have (say) at most an exponential growth in $\tau$. A more precise and generous condition, involving the Poisson kernel on the strip $\O$ \cite[pag\;93]{BL}, is available in \cite[Th.\;2]{CJ}. The fact that $\H=L^2(\G)$ is a separable Hilbert space is also important, in particular allowing to identify the two different types of complex interpolation spaces used in \cite{CJ}.

\smallskip
It is clear that the first norm in \eqref{chitila} is equal to $1$\,. In the second norm, the first term is bounded uniformly in $\tau$ as we said before. For the second term we apply our assumptions, Lemma \ref{mosneni} and Remark \ref{calarasi} with $\Phi:=\varphi^{1+i\tau}=\varphi\,\varphi^{i\tau}$ and $\Psi:=\psi^{1+i\tau}=\psi\,\psi^{i\tau}$. In \eqref{calafat} the second, the third and the forth expressions are finite and $\tau$-independent, since they do  not depend on the factors $\varphi^{i\tau}$ and $\psi^{i\tau}$. For the first one we can write
$$
\begin{aligned}
\sup_{\lambda>0}\lambda\Big\vert\big[\varphi(\lambda)^{1+i\tau}\big]'\varphi(\lambda)^{-1-i\tau}\Big\vert&=\sup_{\lambda>0}\lambda\Big\vert\big[(1+i\tau)\varphi'(\lambda)\varphi(\lambda)^{i\tau}\varphi(\lambda)^{-1-i\tau}\Big\vert\\
&=(1+\tau^2)^{1/2}\sup_{\lambda>0}\lambda\big\vert\varphi'(\lambda)\big\vert\varphi(\lambda)^{-1}
\end{aligned}
$$ 
and this is largely enough for applying \cite[Th.\;2]{CJ}.
\end{proof}

We recall that on any graded Lie group there exist homogeneous quasi-norms. We fix one of them $[\cdot]:\G\to[0,\infty)$\,; by definition, it is continuous and one has
$$
[x]=0\ \Leftrightarrow\ x=0\,,\quad[{\sf dil}_t(x)]=e^t\,[x]\,,\quad[x^{-1}]=[x]\,,\quad\forall\,x\in\G\,.
$$
One could take
$$
[x]:=|{\sf x}_1|^{1/\nu_1}+\dots+|{\sf x}_N|^{1/\nu_N}
$$
for example. The precise choice will not be important below, and this is not unexpected: on one hand, two homogeneous quasi-norms are always equivalent and on the other hand we did not specify the value of the absolute constants in the $H$-smoothness estimates. Besides $[\cdot]$\,, we can also use one of the usual norms on $\g$ transported by $\log$ on $\G$\,, as
$$
|x|:=|{\sf x}_1|+\dots+|{\sf x}_N|\,.
$$

\begin{Corollary}\label{urzziceni} 
Let $H$ be a self-adjoint, injective and homogeneous operator on the graded group $\G$ and let $\th\in(1/2,1]$\,. 
The operators 
$$
S_\th(H):=(1+[x])^{-\th\nu_N}({\sf Id}+\mathbf D)^{-\th\nu_N}|H|^{1/2}
$$
and
$$
S'_\th(H):=(1+|x|)^{-\th}({\sf Id}+\mathbf D)^{-\th\nu_N}|H|^{1/2}
$$
are globally $H$-smooth.
\end{Corollary}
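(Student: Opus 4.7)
The plan is to derive both claims as direct applications of Proposition \ref{ziceni}, choosing in each case a scalar weight $\varphi:(0,\infty)\to(0,\infty)$ and a spatial weight $\psi:\G\to(0,\infty)$ so that $\psi(x)^\th\varphi(\mathbf D)^\th|H|^{1/2}$ equals the operator under study. For $S_\th(H)$ one sets
$$
\varphi(\lambda):=(1+\lambda)^{-\nu_N},\qquad \psi(x):=(1+[x])^{-\nu_N},
$$
while for $S'_\th(H)$ one keeps the same $\varphi$ and takes instead $\psi'(x):=(1+|x|)^{-1}$.

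The two conditions on $\varphi$ in \eqref{pocoscany} are common to both operators and immediate: $\lambda\varphi'(\lambda)\varphi(\lambda)^{-1}=-\nu_N\lambda/(1+\lambda)$ is bounded on $(0,\infty)$, and $\lambda^{\nu_j}\varphi(\lambda)=\lambda^{\nu_j}(1+\lambda)^{-\nu_N}$ is bounded for every $j$ because $\nu_j\le\nu_N$. The condition on $\psi$ rests on the inequality $|{\sf x}_j|\le C_j[x]^{\nu_j}$, valid for every $j$: by the equivalence of homogeneous quasi-norms on $\G$ one has $|{\sf x}_j|^{1/\nu_j}\le\sum_k|{\sf x}_k|^{1/\nu_k}\le C[x]$, so that $|{\sf x}_j|\le C_j[x]^{\nu_j}$. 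Combining this with $\nu_j\le\nu_N$ gives $|{\sf x}_j\psi(x)|\le C_j[x]^{\nu_j}(1+[x])^{-\nu_N}\in L^\infty(\G)$. For $\psi'$ the bound is trivial from $|{\sf x}_j|\le|x|$, which already implies $|{\sf x}_j\psi'(x)|\le 1$.

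The remaining hypotheses of Proposition \ref{ziceni} are essentially automatic. The boundedness of $\varphi(\mathbf D)^\si\psi(x)^\si$ on $\H$ for $\si\in[0,1]$ holds because $({\sf Id}+\mathbf D)^{-\nu_N\si}$ is a bounded function of the self-adjoint operator $\mathbf D$ via the functional calculus, while the spatial factor is multiplication by a bounded function, and the same is true for the primed variant. The required holomorphy of $z\mapsto\<\psi(x)^zu,\varphi(\mathbf D)^{\overline z}v\>$ on $\{0<\re z<1\}$ for $u,v\in C^\infty_0(\G)$ follows, after dominated convergence, from the pointwise holomorphy of $z\mapsto(1+[x])^{-\nu_N z}$ (respectively $z\mapsto(1+|x|)^{-z}$) and of $z\mapsto(1+\lambda)^{-\nu_N z}$ together with the Borel functional calculus for $\mathbf D$. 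With all hypotheses in place, Proposition \ref{ziceni} yields the global $H$-smoothness of both $S_\th(H)$ and $S'_\th(H)$. I do not expect any serious obstacle; the only substantive point is the coordinate estimate $|{\sf x}_j|\le C_j[x]^{\nu_j}$, and everything else is routine bookkeeping.
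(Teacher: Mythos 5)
Your proposal is correct and follows exactly the paper's route: the paper also proves the corollary by applying Proposition \ref{ziceni} with $\varphi(\lambda)=(1+\lambda)^{-\nu_N}$ and $\psi(x)=(1+[x])^{-\nu_N}$ (respectively $\psi(x)=(1+|x|)^{-1}$), leaving the verification of \eqref{pocoscany} and the boundedness/holomorphy hypotheses as ``easy to check.'' You simply supply those routine details, including the coordinate bound $|{\sf x}_j|\le C_j[x]^{\nu_j}$, which is precisely the homogeneity observation the paper invokes.
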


\begin{proof}
The coordinate function ${\sf x}_j$ is homogeneous of degree $\nu_j\le\nu_N$. 
Then it is easy to check that the function $\,\varphi(\lambda):=(1+\lambda)^{-\nu_N}$ and any of the functions $\,\psi(x):=(1+|x|)^{-1}$ or $\,\psi(x):=(1+[x])^{-\nu_N}$ satisfy the assumptions of Proposition \ref{ziceni}. 
\end{proof}

\begin{Remark}\label{azuga}
{\rm Suppose that our operator has the form $H=R^\mu$ with $\mu>0$ and $R$ a positive Rockland operator of homogeneous order $m$\,. Using $R$ for the Sobolev defining operator $\mathbf D=R^{1/m}$ and making some easy changes in the arguments above, one gets that 
$$
(1+[x])^{-\th\nu_N}({\sf Id}+R)^{-\th\nu_N/m}R^{\mu/2}\ \,{\rm is\ an}\ R^\mu{\rm -smooth\ operator.}
$$ 
The function $\R_+\ni s\to(1+s)^{-\th\nu_N/m}s^{\mu/2}$ is {\it unbounded} for some admissible $\th$ if and only if $m\mu>\nu_N$. Note that $m\mu$ is the homogeneous order of our operator $H$\,, while $\nu_N$ is intrinsically attached to the graded group. For $\R^N$ one has $\nu_N=1$ and for the Heisenberg group $\nu_N=2$\,.
}
\end{Remark}

\begin{Corollary}\label{frumusani}
Besides being homogeneous and injective, the operator $H$ is also supposed left-invariant. 
\begin{enumerate}
\item[(i)]
Then the operator $\,(1+[y^{-1}x])^{-\th\nu_N}({\sf Id}+\mathbf D)^{-\th\nu_N}|H|^{1/2}$ is globally $H$-smooth for each $\th\in(1/2,1]$\,, uniformly in $y\in\G$\,.
\item[(ii)]
Let $\mu$ be a finite positive measure on $\G$\,.
For $\th\in(1/2,1]$\,, let us set 
\begin{equation}\label{bacau}
g_{\th,\mu}(x):=\int_\G\,(1+[y^{-1}x])^{-\th\nu_N}d\mu(y)\,.
\end{equation}
\end{enumerate}
Then the operator $\,g_{\th,\mu}(x)({\sf Id}+\mathbf D)^{-\th\nu_N}|H|^{1/2}$ is globally $H$-smooth.
\end{Corollary}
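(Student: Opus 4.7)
The plan is to exploit left-invariance to reduce part (i) to Corollary \ref{urzziceni} via unitary conjugation by left translations, and then to treat the operator in part (ii) as a $\mu$-superposition of the operators from (i), estimated via Minkowski's integral inequality.

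For (i), let $L_y$ denote the unitary left translation on $\H = L^2(\G)$ defined by $(L_y u)(x) := u(y^{-1}x)$. A direct computation shows that for any multiplication operator $f(x)$ one has $L_y f(x) L_y^{-1} = f(y^{-1}x)$, so the multiplication operator by $(1+[y^{-1}x])^{-\th\nu_N}$ equals $L_y (1+[x])^{-\th\nu_N} L_y^{-1}$. The operator $\mathbf D$ is a power of a left-invariant Rockland operator, hence commutes with $L_y$, and by hypothesis $H$ is left-invariant, so $|H|^{1/2}$ and $e^{itH}$ also commute with $L_y$. Therefore
\begin{equation*}
T_y := (1+[y^{-1}x])^{-\th\nu_N}(\mathsf{Id}+\mathbf D)^{-\th\nu_N}|H|^{1/2} = L_y \, S_\th(H)\, L_y^{-1},
\end{equation*}
where $S_\th(H)$ is the operator from Corollary \ref{urzziceni}. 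Then for $v\in\H$, using unitarity of $L_y$ and the commutation $L_y^{-1}e^{itH} = e^{itH}L_y^{-1}$,
\begin{equation*}
\int_\R \bigl\Vert T_y e^{itH} v\bigr\Vert^2 dt = \int_\R \bigl\Vert S_\th(H)\, e^{itH} L_y^{-1} v\bigr\Vert^2 dt \le C_\th \bigl\Vert L_y^{-1} v\bigr\Vert^2 = C_\th \!\p\!v\!\p^2,
\end{equation*}
where $C_\th$ is the constant furnished by Corollary \ref{urzziceni}. The key point is that $C_\th$ does not depend on $y$, establishing the claimed uniform $H$-smoothness.

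For (ii), observe that $g_{\th,\mu}$ is bounded (in fact $\|g_{\th,\mu}\|_\infty \le \mu(\G)$ since each integrand is at most $1$), so $g_{\th,\mu}(x)(\mathsf{Id}+\mathbf D)^{-\th\nu_N}|H|^{1/2}$ is a well-defined densely-defined operator. By a direct computation (interchanging the pointwise multiplication with the integral against $\mu$, via Fubini), one has on the appropriate domain
\begin{equation*}
g_{\th,\mu}(x)(\mathsf{Id}+\mathbf D)^{-\th\nu_N}|H|^{1/2} \, v = \int_\G T_y v \, d\mu(y), \quad v\in\H.
\end{equation*}
Applying this identity to $e^{itH}v$ and then using Minkowski's integral inequality for the $L^2(\R; \H)$-valued integral over $\G$,
\begin{equation*}
\Bigl(\int_\R \Bigl\Vert \int_\G T_y e^{itH} v\, d\mu(y)\Bigr\Vert^2 dt\Bigr)^{1/2} \le \int_\G \Bigl(\int_\R \!\p\!T_y e^{itH} v\!\p^2 dt\Bigr)^{1/2} d\mu(y) \le C_\th^{1/2}\,\mu(\G)\!\p\!v\!\p,
\end{equation*}
where the last step invokes part (i). Squaring gives the $H$-smoothness estimate with constant $C_\th \mu(\G)^2$.

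The only slightly delicate point is the justification of the vector-valued Fubini argument, i.e.\ writing $g_{\th,\mu}(x)(\mathsf{Id}+\mathbf D)^{-\th\nu_N}|H|^{1/2}$ as $\int_\G T_y \, d\mu(y)$ acting on $v$; this is standard given that the integrand $(1+[y^{-1}x])^{-\th\nu_N}$ is jointly measurable, nonnegative and bounded by $1$, and that $\mu$ is finite, so Fubini applies. Once this representation is in hand, Minkowski's integral inequality together with part (i) does all the remaining work, and neither step presents a real obstacle.
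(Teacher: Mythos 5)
Your proposal is correct and follows essentially the same route as the paper: part (i) by conjugating $S_\th(H)$ with the unitary left translations and using that $\mathbf D$, $|H|^{1/2}$ and $e^{itH}$ commute with them, and part (ii) by viewing the operator as a $\mu$-superposition of the translated operators and invoking the $y$-uniform bound from (i). The only cosmetic difference is that you apply Minkowski's integral inequality in $L^2(\R;\H)$ where the paper pulls the norm inside and then uses Cauchy--Schwarz plus Fubini, both yielding the same constant $C_\th\,\mu(\G)^2$.
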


\begin{proof}
Let us denote by $\Lambda_y$ the operator of left translation by $y\in\G$\,, acting as $\big[\Lambda_y(u)](x):=u(y^{-1}x)$\,; it defines a unitary operator in $\H:=L^2(\G)$\,. Since $\mathbf D$ is a power of a Rockland operator, which is left-invariant, $({\sf Id}+\mathbf D)^{-\th\nu_N}$ commutes with $\Lambda_y$\,, while $|H|^{1/2}$ and $e^{itH}$ also commute with $\Lambda_y$\,, by our assumption. On the other hand, it is easy to check that 
$$
\Lambda_y(1+[x])^{-\th\nu_N}=(1+[y^{-1}x])^{-\th\nu_N}\Lambda_y
$$ 
(both sides involving multiplication operators in the variable $x$)\,. Using Corollary \ref{urzziceni}, the unitarity of left translations and setting $B(t):=({\sf Id}+\mathbf D)^{-\th\nu_N}|H|^{1/2}e^{itH}$, one can write
\begin{equation*}\label{eforienord}
\begin{aligned}
\int_\R\,\big\Vert (1+[y^{-1}x])^{-\th\nu_N}B(t)u\,\big\Vert^2 dt&=\int_\R\,\big\Vert \Lambda_y(1+[x])^{-\th\nu_N}\Lambda_y^{-1}B(t)u\,\big\Vert^2 dt\\
&=\int_\R\,\big\Vert (1+[x])^{-\th\nu_N}B(t)\Lambda_y^{-1}u\,\big\Vert^2 dt\\
&\le C\!\p\!\Lambda_y^{-1} u\!\p^2\,= C\!\p\!u\!\p^2,
\end{aligned}
\end{equation*}
showing $H$-smoothness with the same constant $C$. Then, using this and the Cauchy-Schwartz inequality,
\begin{equation*}\label{eforiesud}
\begin{aligned}
\int_\R\,\Big\Vert \int_\G\,(1+[y^{-1}x])^{-\th\nu_N}&d\mu(y)\,B(t)u\,\Big\Vert^2 dt\le\int_\R\Big[\int_\G\,\big\Vert(1+[y^{-1}x])^{-\th\nu_N} B(t)u\,\big\Vert \,d\mu(y)\Big]^2dt\\
&\le\int_\R\Big[\Big(\int_\G d\mu(y)\Big)^{1/2} \Big(\int_\G\big\Vert(1+[y^{-1}x])^{-\th\nu_N} B(t)u\,\big\Vert^2 d\mu(y)\Big)^{1/2}\,\Big]^2dt\\
&=\int_\G d\mu(y)\int_\R\int_\G \big\Vert(1+[y^{-1}x])^{-\th\nu_N} B(t)u\,\big\Vert^2 d\mu(y)dt\\
&=\Big(\int_\G d\mu(y)\Big)^2\int_\R\big\Vert(1+[y^{-1}x])^{-\th\nu_N} B(t)u\,\big\Vert^2 dt\\
&\le\,\mu(\G)^2\,C\!\p\!u\!\p^2.
\end{aligned}
\end{equation*}
\end{proof}

\begin{Remark}\label{uranus}
{\rm Setting $h_\th(x):=(1+[x])^{-\th\nu_N}$, one checks by a short calculation in polar coordinates that $h_\th\in L^p(\G)$ if and only if $\,\th\nu_N p>M$. Then, by Young's inequality, 
$$
g_{\th,\mu}=\mu\ast h_\th\in \underset{p>M/\th\nu_N}{\cap}L^p(\G)\,.
$$ 
}
\end{Remark}

\section{Global smoothness for operators on stratified groups}\label{stupoare}

We assume now that the group $\G$ is stratified: in addition to being graded, this means that $\mathfrak v_1$ generates $\g$ as a Lie algebra. The first $N_1:={\rm dim}(\mathfrak v_1)$ vectors of the basis will be chosen in $\mathfrak v_1$\,.  
Looking at the elements of the Lie algebra as derivations in spaces of functions on $\G$\,, one has {\it the horizontal gradient} $\,\nabla:=(X_1,\dots,X_{N_1})$ and {\it the horizontal sublaplacian}
\begin{equation}\label{subL}
\Delta:=-\sum_{j=1}^{N_1}X_j^* X_j=\sum_{j=1}^{N_1}X_j^2.
\end{equation} 
Note our {\it sign convention}: most often the operator defined in \eqref{subL} is denoted by $-\Delta$\,. It is known that (our) $\Delta$ is a self-adjoint positive operator in $L^2(\G)$\,; its domain is the Sobolev space $L^2_2(\G)$\,. It can be seen as a Rockland operator \cite[Lemma 4.1.7]{FR}. Then $\,\Delta^{\alpha}$ is also a self-adjoint positive operator; its domain is the Sobolev space $L^2_{2\alpha}(\G)$ and it is homogeneous of degree $\beta=2\alpha$\,.
It is clear that a defining operator $\mathbf D$ for the Sobolev spaces is $|D|:=\Delta^{1/2}$. The notation $|D|$ is convenient having the case $\G=\R^N$ in view; we do not claim anything special about it.

\smallskip
First we write down reformulations of Corollary \ref{urzziceni} for this particular case:

\begin{Corollary}\label{urtziceni}
Assume that $H$ is a self-adjoint injective homogeneous operator on the stratified group $\G$\,. For every $\th\in(1/2,1]$ the operators
\begin{equation*}\label{trincabesti}
S_\th(H):=(1+[x])^{-\th\nu_N}\big({\sf Id}+|D|\big)^{-\th\nu_N}|H|^{1/2},
\end{equation*}
\begin{equation*}\label{trincabulesti}
\tilde S_\th(H):=(1+[x])^{-\th\nu_N}({\sf Id}+\Delta)^{-\th\nu_N/2}|H|^{1/2}
\end{equation*} 
are globally $H$-smooth. In particular, this applies to $H=\Delta^\alpha$ for every $\alpha>0$\,.
\end{Corollary}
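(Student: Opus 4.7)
The plan is that both assertions fall out of the previously proved Corollary \ref{urzziceni} (respectively a direct application of Proposition \ref{ziceni}) once we make the right choice of defining operator and interpolation exponent; no new commutator estimates are needed.

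For $S_\th(H)$: on a stratified group the horizontal sublaplacian $\Delta$ is a positive Rockland operator of homogeneous degree $q=2$, so $\mathbf D:=\Delta^{1/2}=|D|$ is a legitimate ``first order'' defining operator for the Sobolev scale. With this choice, the operator appearing in Corollary \ref{urzziceni} is exactly $S_\th(H)$, and its global $H$-smoothness follows at once. For the final claim, one checks that $H=\Delta^\alpha$ with $\alpha>0$ is admissible: it is self-adjoint by functional calculus, homogeneous of degree $2\alpha>0$, and injective because $\Delta$ is (as recalled in Section \ref{fralticeni} via \cite[Cor.\;4.3.4]{FR}).

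For $\tilde S_\th(H)$: the natural thing is to revisit Proposition \ref{ziceni} with a modified function $\varphi$. I would take
$$
\varphi(\lambda):=(1+\lambda^2)^{-\nu_N/2},\qquad \psi(x):=(1+[x])^{-\nu_N},
$$
so that, via the functional calculus of $|D|$, $\varphi(|D|)^\th=(1+\Delta)^{-\th\nu_N/2}$ and $\psi(x)^\th=(1+[x])^{-\th\nu_N}$; hence $T_\th(\varphi,\psi;H)=\tilde S_\th(H)$. The three bounds \eqref{pocoscany} are elementary: $\lambda\varphi'(\lambda)/\varphi(\lambda)=-\nu_N\lambda^2/(1+\lambda^2)$ is bounded; $\lambda^{\nu_j}\varphi(\lambda)$ behaves like $\lambda^{\nu_j}$ near $0$ and like $\lambda^{\nu_j-\nu_N}$ at infinity, both bounded because $1\le\nu_j\le\nu_N$; and ${\sf x}_j$ is homogeneous of degree $\nu_j\le\nu_N$, so by comparison with the (continuous) quasi-norm on the compact unit ``sphere'' $\{[x]=1\}$ one has $|{\sf x}_j(x)|\lesssim [x]^{\nu_j}$, which combined with $\psi$ yields ${\sf x}_j\psi$ bounded.

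The remaining hypotheses of Proposition \ref{ziceni}---boundedness of $\varphi(|D|)^\si\psi(x)^\si$ as a sesquilinear form on $C^\infty_0(\G)$ and the holomorphy of $\langle\psi(x)^z u,\varphi(\mathbf D)^{\overline z}v\rangle$ on the strip---are verified exactly as for the analogous $\varphi,\psi$ used to prove Corollary \ref{urzziceni}, since both functions have the same qualitative behavior. The only step I would anticipate writing out with any care is this holomorphy/boundedness verification, but I expect no genuine obstacle: the extra factor $(1+\lambda^2)^{-\nu_N/2}/(1+\lambda)^{-\nu_N}$ is a bounded smooth function of $|D|$ that does not affect either analyticity in $z$ or norm control.
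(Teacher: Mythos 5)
Your proposal is correct and follows exactly the route the paper intends: $S_\th(H)$ by particularizing Corollary \ref{urzziceni} with $\mathbf D=\Delta^{1/2}=|D|$ (since the sublaplacian is a positive Rockland operator of degree $2$), and $\tilde S_\th(H)$ by re-running Proposition \ref{ziceni} with $\varphi(\lambda)=(1+\lambda^2)^{-\nu_N/2}$, $\psi(x)=(1+[x])^{-\nu_N}$, whose hypotheses you verify correctly. The paper itself only says ``particularization'' and leaves the second operator as an exercise, so your argument simply supplies the omitted details in the expected way.
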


\begin{proof}
The first one is obtained just by particularization. The second is left as an exercice.
\end{proof}

We work now on an improvement, based on a Hardy-type inequality and the fact that in Proposition \ref{ziceni} we did not ask the functions $\varphi,\psi$ to be bounded. In the next result one supposes that $\,\nu_1=1<M/2$\,; for most of the stratified groups (but not for the commutative cases $\R$ and $\R^2$) this holds automatically.

\begin{Proposition}\label{giurgiu}
Suppose that $\G$ is a stratified group with a homogeneous quasi-norm $[\cdot]$ and $H$ is a self-adjoint injective homogeneous operator in $L^2(\G)$\,. Assume that $M\ge 3$\,. For any $\th\in(1/2,1]$ the operator
\begin{equation*}\label{hagrig}
U_{\th}(H):=(1+[x])^{-\th(\nu_N-1)}[x]^{-\th}({\sf Id}+|D|)^{-\th(\nu_N-1)}|D|^{-\th}|H|^{1/2}
\end{equation*}
is globally $H$-smooth.
\end{Proposition}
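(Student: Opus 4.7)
The plan is to apply Proposition \ref{ziceni} with the specific choices
$$
\varphi(\lambda):=(1+\lambda)^{-(\nu_N-1)}\lambda^{-1}\,,\qquad \psi(x):=(1+[x])^{-(\nu_N-1)}[x]^{-1}\,,
$$
so that $\psi(x)^{\th}\varphi(|D|)^{\th}|H|^{1/2}=U_\th(H)$ (up to the natural identification $\mathbf D=|D|$ for the stratified case, as noted before Corollary \ref{urtziceni}). The proof then reduces to verifying the hypotheses of Proposition \ref{ziceni}: the three size conditions in \eqref{pocoscany}, the boundedness of $\varphi(|D|)^{\si}\psi(x)^{\si}$ on $\H$ for $\si\in[0,1]$, and the holomorphic family property.

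First I would check \eqref{pocoscany}. A direct differentiation of $\varphi$ gives $\lambda\varphi'(\lambda)\varphi(\lambda)^{-1}=-1-(\nu_N-1)\lambda(1+\lambda)^{-1}$, clearly bounded on $(0,\infty)$. Next, $\lambda^{\nu_j}\varphi(\lambda)=\lambda^{\nu_j-1}(1+\lambda)^{-(\nu_N-1)}$; this is bounded at $0$ because $\nu_j\ge\nu_1=1$ (using that $\G$ is stratified, so $\nu_1=1$) and bounded at infinity because $\nu_j\le\nu_N$. For the spatial condition I would use that the coordinate function ${\sf x}_j$ is homogeneous of weight $\nu_j$ with respect to the dilations, so that $|{\sf x}_j|\le C[x]^{\nu_j}$ by compactness of $\{[x]=1\}$; then $|{\sf x}_j\psi(x)|\le C[x]^{\nu_j-1}(1+[x])^{-(\nu_N-1)}$ is bounded on $\G$ exactly as above.

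The main obstacle is the boundedness of $\varphi(|D|)^{\si}\psi(x)^{\si}$ on $\H$, because each factor is unbounded (singular at the origin in the spectral and spatial variables, respectively). Since $\varphi(|D|)$ and $(1+|D|)^{-(\nu_N-1)}$ are functions of the same self-adjoint operator, they commute, and similarly the two multiplication operators commute. So one may reorder the product as
$$
\psi(x)^{\si}\varphi(|D|)^{\si}=(1+[x])^{-\si(\nu_N-1)}\,[x]^{-\si}|D|^{-\si}\,(1+|D|)^{-\si(\nu_N-1)}\,.
$$
The outer factor $(1+[x])^{-\si(\nu_N-1)}$ and the tail factor $(1+|D|)^{-\si(\nu_N-1)}$ are bounded for $\si\in[0,1]$. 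For the middle factor I would invoke the Ciatti--Cowling--Ricci Hardy-type inequality \cite{CCR}, which on a stratified group reads
$$
\big\Vert[x]^{-s}u\big\Vert\le C_s\big\Vert|D|^{s}u\big\Vert\,,\qquad 0<s<M/2\,.
$$
The range hypothesis is compatible with $s\in(0,1]$ exactly when $M\ge 3$; this is precisely where the assumption on the homogeneous dimension enters. Rewritten operator-theoretically, the inequality says that $[x]^{-\si}|D|^{-\si}$ extends to an element of $\mathbb B(\H)$ for every $\si\in[0,1]$, closing the boundedness check.

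Finally, the holomorphic family condition $z\mapsto\langle \psi(x)^{z}u,\varphi(|D|)^{\bar z}v\rangle$ for $u,v\in C^\infty_0(\G)$ follows from the functional calculi for the self-adjoint operators $[x]$, $(1+[x])$, $|D|$ and $(1+|D|)$: the four factors $[x]^{-z}$, $(1+[x])^{-(\nu_N-1)z}$, $|D|^{-z}$ and $(1+|D|)^{-(\nu_N-1)z}$ are each analytic in $z$ with values in sesquilinear forms on $C^\infty_0(\G)$, and Stein interpolation style arguments show the combined analyticity in the open strip. Proposition \ref{ziceni} then applies and yields the global $H$-smoothness of $U_\th(H)$ for every $\th\in(1/2,1]$.
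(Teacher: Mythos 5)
Your proposal is correct and follows essentially the same route as the paper: the same choice $\varphi(\lambda)=\lambda^{-1}(1+\lambda)^{-\nu_N+1}$, $\psi(x)=[x]^{-1}(1+[x])^{-\nu_N+1}$, the same verification of \eqref{pocoscany}, the Ciatti--Cowling--Ricci Hardy inequality with $\si\in[0,1]\subset[0,M/2)$ (which is where $M\ge 3$ enters), and an appeal to Proposition \ref{ziceni}. The only cosmetic difference is that you bound $\psi(x)^{\si}\varphi(|D|)^{\si}$ via $[x]^{-\si}|D|^{-\si}\in\mathbb B(\H)$ while Proposition \ref{ziceni} asks for the adjoint order $\varphi(\mathbf D)^{\si}\psi(x)^{\si}$, which the paper checks directly with $|D|^{-\si}[x]^{-\si}\in\mathbb B(\H)$; the two are equivalent by taking adjoints, so this is immaterial.
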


\begin{proof}
We have to show that the pair of functions
\begin{equation*}\label{strigati}
\varphi(\lambda):=\lambda^{-1}(1+\lambda)^{-\nu_N+1}\,,\quad\psi(x):=[x]^{-1}(1+[x])^{-\nu_N+1}
\end{equation*}
satisfies the conditions of Proposition \ref{ziceni}.

\smallskip
Computing, one gets
$$
\lambda\varphi'(\lambda)=\Big[-1+(1-\nu_N)\frac{\lambda}{1+\lambda}\Big]\varphi(\lambda)\,,
$$
so the first condition in \eqref{pocoscany} is verified. One has 
$$
\lambda^{\nu_j}\varphi(\lambda)=\lambda^{\nu_j-1}(1+\lambda)^{-\nu_N+1}=\big[\lambda^{\nu_j-1}(1+\lambda)^{-\nu_j+1}\big](1+\lambda)^{\nu_j-\nu_N}
$$ 
and both factors are bounded, because $1=\nu_1\le\nu_j\le\nu_N$ for every $j$\,. Similarly
$$
{\sf x}_j\psi(x)={\sf x}_j[x]^{-1}(1+[x])^{-\nu_N+1}={\sf x}_j[x]^{-\nu_j}[x]^{\nu_j-1}(1+[x])^{-\nu_N+1}
$$
is bounded, since the coordinate ${\sf x}_j$ is homogeneous of degree $\nu_j$ and $[\cdot]$ is homogeneous of degree $1$\,.

\smallskip
Recall that, by a Hardy estimate \cite[Sect.\;3]{CCR}, if $\,\gamma\in[0,M/2)$ one has 
\begin{equation}\label{predeal}
\Delta^{-\gamma/2}\,[x]^{-\gamma}=|D|^{-\gamma}\,[x]^{-\gamma}\in\mathbb B(\H)\,.
\end{equation}
So, for $\si\in[0,1]\subset[0,M/2)$ one has
$$
\varphi(|D|)^\si\psi(x)^\si=({\sf Id}+|D|)^{-\si(\nu_N-1)}|D|^{-\si}[x]^{-\si}(1+[x])^{-\si(\nu_N-1)}\in\mathbb B(\H)
$$ 
because of \eqref{predeal}. For $u,v\in C_0(\G)$\,, the holomorphy of the map $z\to\<\varphi(|D|)^z\psi(x)^z u,v\>$ in the domain $\{\re z\in(0,1)\}$ follows from \cite[Sect.\,3]{CCR}. Applying Proposition \ref{ziceni} finishes the proof.
\end{proof}

\begin{Example}\label{medgidia}
{\rm For the Heisenberg group $\G:={\sf H}_{2n+1}\ni(a_1\dots,a_n;b_1,\dots,b_n;s)$ one just has to put $N=2n+1$\,, $M:=2n+2$ and $\nu_N=2$\,. The canonical (positive) sublaplacian is 
$$
\Delta:=-\sum_{k=1}^n\Big(\partial_{a_k}-\frac{b_k}{2}\partial_s\Big)^2-\sum_{k=1}^n\Big(\partial_{b_k}+\frac{a_k}{2}\partial_s\Big)^2
$$ 
and for $[\cdot]$ one can choose the Koranyi homogeneous quasi-norm
$$
[(a,b,t)]:=\Big(\big[\sum_{k=1}^n(a_k^2+b_k^2)\big]^2+s^2\Big)^{1/4}.
$$
}
\end{Example}

\begin{Example}\label{midia}
{\rm If $\G=\R^N$ is Abelian, then $M=N$ and $\nu_1=\nu_N=1$\,. So, if $H$ is self-adjoint, injective and homogeneous, then 
$$
U_\th(H):=|x|^{-\th}|D|^{-\th}|H|^{1/2}
$$ 
is globally $H$-smooth for every $\th\in(1/2,1]$ and for the usual absolute value $|\cdot|$\,. If $\,H:=\Delta^\alpha=|D|^{2\alpha}$ we get that $U_\th(\Delta^\alpha):=|x|^{-\th}|D|^{\alpha-\th}$ is $\Delta^\alpha$-smooth if $\th\in(1/2,1]$\,. The case $\alpha=1$ has been obtained in \cite{KY}. We refer to \cite{Su,RSug2} for extensions.
}
\end{Example}


\end{document}